\numberwithin{equation}{section}
\newtheorem{maintheorem}{Theorem}
\newtheorem{maincoro}[maintheorem]{Corollary}
\newtheorem{mainprop}[maintheorem]{Proposition}
\newtheorem{theorem}{Theorem}[section]
\newtheorem*{theorem*}{Theorem}
\newtheorem{lemma}[theorem]{Lemma}
\newtheorem{corollary}[theorem]{Corollary}
\theoremstyle{definition}{
\newtheorem{example}[theorem]{Example}

\newtheorem{question}[theorem]{Question}
}
\theoremstyle{remark}{

\newtheorem*{remark*}{Remark}

}
\newcommand{\R}{\mathbb R}
\newcommand{\N}{\mathbb N}
\newcommand{\E}{\mathbb{E}}
\renewcommand{\P}{\mathbb{P}}
\DeclareMathOperator{\var}{Var}
\renewcommand{\epsilon}{\varepsilon}
\newcommand{\tX}{\tilde{X}}
\newcommand{\one}{\boldsymbol{1}}
\newcommand{\oneb}[1]{\one_{\{ #1 \}}}
\newcommand{\deq}{:=}
\newcommand{\tmix}{t_\textsc{mix}}
\newcommand{\trel}{t_\textsc{rel}}
\newcommand{\tsep}{t_\mathrm{sep}}
\newcommand{\gap}{\text{\tt{gap}}}
\begin{document}
\title{Total variation cutoff in birth-and-death chains}
\date{}

\author{Jian Ding, \thinspace Eyal Lubetzky and Yuval Peres}

\address{Jian Ding\hfill\break
Department of Statistics\\
UC Berkeley\\
Berkeley, CA 94720, USA.}
\email{jding@stat.berkeley.edu}
\urladdr{}

\address{Eyal Lubetzky\hfill\break
Microsoft Research\\
One Microsoft Way\\
Redmond, WA 98052-6399, USA.}
\email{eyal@microsoft.com}
\urladdr{}

\address{Yuval Peres\hfill\break
Microsoft Research\\
One Microsoft Way\\
Redmond, WA 98052-6399, USA.}
\email{peres@microsoft.com}
\urladdr{}
\thanks{Research of J. Ding and Y. Peres was supported in part by NSF grant DMS-0605166.}

\hyphenation{sta-rted}
\begin{abstract}
The \emph{cutoff phenomenon} describes a case where a Markov chain exhibits a sharp transition in its convergence to stationarity. In 1996, Diaconis surveyed this phenomenon, and asked how one could recognize its occurrence in families of finite ergodic Markov chains. In 2004, the third author noted that a necessary condition for cutoff in
a family of reversible chains is
that the product of the mixing-time and spectral-gap tends to infinity, and conjectured that in many settings,
this condition should also be sufficient.
Diaconis and Saloff-Coste (2006) verified this conjecture for continuous-time birth-and-death chains, started at an endpoint,
 with convergence measured in \emph{separation}. It is natural to ask whether the conjecture holds for these chains in the more widely used \emph{total-variation} distance.

In this work, we confirm the above conjecture for all continuous-time or lazy discrete-time birth-and-death chains, with convergence measured via total-variation distance. Namely, if the product of the mixing-time and spectral-gap tends to infinity, the chains exhibit cutoff at the maximal hitting time of the stationary distribution median, with a window of at most the geometric mean between the relaxation-time and mixing-time.

In addition, we show that for any lazy (or continuous-time) birth-and-death chain with stationary distribution $\pi$, the separation $1 - p^t(x,y)/\pi(y)$ is maximized when $x,y$ are the endpoints.
Together with the above results, this implies that total-variation cutoff is equivalent to separation cutoff in any family of such chains.
\end{abstract}

\maketitle

\section{Introduction}

The \emph{cutoff phenomenon} arises when a finite Markov chain converges abruptly to equilibrium. Roughly, this is the case where, over a negligible period of time known as the \emph{cutoff window}, the distance of the chain from the stationary measure drops from near its maximum to near $0$.

Let $(X_t)$ denote an aperiodic irreducible Markov chain on a finite state space $\Omega$ with transition kernel $P(x,y)$,
and let $\pi$ denote its stationary distribution. For any two distributions $\mu,\nu$ on $\Omega$, their \emph{total-variation distance} is defined to be
$$\|\mu-\nu\|_\mathrm{TV} \deq \sup_{A \subset\Omega} \left|\mu(A) - \nu(A)\right| = \frac{1}{2}\sum_{x\in\Omega} |\mu(x)-\nu(x)|~.$$
Consider the worst-case total-variation distance to stationarity at time $t$,
$$ d(t) \deq \max_{x \in \Omega} \| \P_x(X_t \in \cdot)- \pi\|_\mathrm{TV}~,$$
where $\P_x$ denotes the probability given $X_0=x$.
The total-variation \emph{mixing-time} of $(X_t)$, denoted by $\tmix(\epsilon)$ for $0 < \epsilon < 1$, is defined to be
$$ \tmix(\epsilon) \deq \min\left\{t : d(t) \leq \epsilon \right\}~.$$

Next, consider a family of such chains, $(X_t^{(n)})$, each with its corresponding worst-distance from stationarity $d_n(t)$, its mixing-times $\tmix^{(n)}$, etc. We say that this family of chains exhibits \emph{cutoff} iff the following sharp transition in its convergence to stationarity occurs:
\begin{equation}\label{eq-cutoff-def}\lim_{n\to\infty} \frac{\tmix^{(n)}(\epsilon)}{\tmix^{(n)}(1-\epsilon)}=1 \quad\mbox{ for any $0 < \epsilon < 1$}~.\end{equation}

Our main result is an essentially tight bound on the difference between $\tmix(\epsilon)$ and $\tmix(1-\epsilon)$ for general \emph{birth-and-death} chains; a birth-and-death chain has the state space $\{0,\ldots,n\}$ for some integer $n$, and always moves from one state to a state adjacent to it (or stays in place).

We first state a quantitative bound for a single chain, then deduce a cutoff criterion. Let $\gap$ be the spectral-gap of the chain (that is,
$\gap \deq 1-\lambda$ where $\lambda$ is the largest absolute-value of all nontrivial eigenvalues of the transition kernel $P$), and let $\trel\deq \gap^{-1}$ denote the relaxation-time of the chain. A chain is called \emph{lazy} if $P(x,x) \geq \frac{1}{2}$ for all $x\in\Omega$.

\begin{maintheorem}
  \label{thm-1}
For any $0 < \epsilon < \frac{1}{2}$ there exists an explicit $c_\epsilon > 0$ such that every lazy irreducible birth-and-death chain $(X_t)$ satisfies
\begin{equation}\label{eq-thm-1-window-bound}
\tmix(\epsilon) - \tmix(1-\epsilon) \leq c_\epsilon \sqrt{ \trel \cdot \tmix(\mbox{$\frac{1}{4}$})}~.\end{equation}
\end{maintheorem}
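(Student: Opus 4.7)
The strategy is to bracket both $\tmix(\epsilon)$ and $\tmix(1-\epsilon)$ around the expected median hitting time $\tau := \max(\E_0 T_m, \E_n T_m)$ from the worst endpoint, where $m$ is a $\pi$-median, with two-sided window $O_\epsilon(\sqrt{\trel \cdot \tmix(1/4)})$; the stated bound then follows directly.

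The key input is a Keilson-type concentration: starting from endpoint $0$, the hitting time $T_k$ of any state $k$ is distributed as an independent sum of geometric random variables (exponentials in the continuous-time case) whose parameters are the non-zero eigenvalues of the substochastic kernel obtained by making $k$ absorbing. By standard spectral interlacing for birth-and-death chains, each such parameter is at least $\gap$, so each summand has variance at most $\trel$ times its mean. Hence $\var(T_k)\leq\trel\cdot\E T_k$, and Chebyshev places $T_k$ within $O(\sqrt{\trel\cdot\E T_k})$ of its mean; the analogous statement holds from endpoint $n$. Since $\tau\asymp\tmix(1/4)$ (a standard reversibility estimate), the window can be rewritten as $O(\sqrt{\trel\cdot\tmix(1/4)})$.

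For the upper bound $\tmix(\epsilon)\leq\tau+O_\epsilon(\sqrt{\trel\cdot\tmix(1/4)})$, I would route through separation distance: the Diaconis-Fill strong stationary time $\tau^{\ast}$ from an endpoint admits a Keilson-type independent-sum representation, hence concentrates in the same window around a mean of order $\tau$. Then $\|\P_x(X_t\in\cdot)-\pi\|_\mathrm{TV}\leq\P_x(\tau^{\ast}>t)\leq\epsilon$ once $t$ exceeds $\E\tau^{\ast}+A_\epsilon\sqrt{\trel\cdot\tmix(1/4)}$. Monotonicity of the TV distance in the starting state (the worst case in a lazy birth-and-death chain is attained at an endpoint) reduces the general $x$ to $x\in\{0,n\}$.

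The lower bound $\tmix(1-\epsilon)\geq\tau-O_\epsilon(\sqrt{\trel\cdot\tmix(1/4)})$ is the main obstacle. The naive estimate $d(t)\geq\P_0(T_m>t)-\pi([0,m-1])$ tops out at $1/2$ because $\pi([0,m-1])$ can be $1/2$. To reach $1-\epsilon$, I would replace $m$ by the quantile $m_\epsilon$ on the endpoint-$0$ side satisfying $\pi([0,m_\epsilon-1])\leq\epsilon/2$, yielding the stronger bound $d(t)\geq\P_0(T_{m_\epsilon}>t)-\epsilon/2$; Chebyshev then forces $d(t)>1-\epsilon$ at $t=\E_0 T_{m_\epsilon}-A_\epsilon\sqrt{\trel\cdot\E_0 T_{m_\epsilon}}$. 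The hard part is then the quantile-to-median comparison $\E_0 T_m-\E_0 T_{m_\epsilon}=O_\epsilon(\sqrt{\trel\cdot\tmix(1/4)})$, which I would verify by applying the Keilson representation to the conditional hitting time $T_m-T_{m_\epsilon}$ (the time from $m_\epsilon$ to $m$), noting that the claim is anyway trivial outside the cutoff regime $\trel\ll\tmix$, since $\sqrt{\trel\cdot\tmix(1/4)}\gtrsim\tmix(\epsilon)-\tmix(1-\epsilon)$ whenever $\trel$ and $\tmix$ are comparable.
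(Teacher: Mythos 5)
Your overall architecture — concentrate endpoint hitting times of quantile states via Keilson's geometric-sum representation, then translate into a two-sided bound on $\tmix(\epsilon)-\tmix(1-\epsilon)$ — is the same as the paper's. But there is a concrete gap at the heart of your concentration step. You assert that ``by standard spectral interlacing\ldots\ each such parameter is at least $\gap$,'' i.e.\ that the eigenvalues $\theta_j$ of the chain with $\ell=Q(1-\epsilon)$ made absorbing satisfy $1-\theta_j\geq\gap$. Cauchy interlacing for the principal submatrix $P|_\ell$ gives the \emph{opposite} inequality for the top eigenvalue: $\theta_0\geq\lambda_2$, hence $1-\theta_0\leq\gap$. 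So the absorbed chain's spectral gap can be strictly smaller than $\gap$, and interlacing alone cannot rescue the bound $\var_0\tau_\ell\leq\trel\cdot\E_0\tau_\ell$. What is actually true, and what the paper proves as a separate lemma (Lemma~\ref{lem-gap-gap-inequality}), is the weaker estimate $\gap|_{[0,\ell]}\geq\epsilon\cdot\gap$, obtained by comparing the Rayleigh quotients \eqref{eq-gap} and \eqref{eq-gap-l} and using $\pi([0,\ell-1])\leq 1-\epsilon$ to control $\var_\pi f/\E_\pi f^2$. This $\epsilon$-loss is then carried through all subsequent constants, which is why the paper works with $\var_0\tau_{Q(1-\epsilon)}\leq\E_0\tau_{Q(1-\epsilon)}/(\epsilon\,\gap)$ rather than $\leq\trel\cdot\E_0\tau_{Q(1-\epsilon)}$.

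Two further steps in your sketch are underdetermined and in the paper require real arguments. First, the quantile-to-median comparison $\E_0 T_m-\E_0 T_{m_\epsilon}=O_\epsilon(\sqrt{\trel\cdot\tmix})$ is exactly the content of Lemma~\ref{lem-hitting-ratio}; simply ``applying Keilson to the conditional hitting time'' does not bound its \emph{mean}. The paper instead initializes the chain from $\pi$ restricted to $[0,Q(\epsilon)]$, shows the TV distance to $\pi$ drops below $\epsilon/2$ after $t_\epsilon\approx\tfrac{3}{2}\log(1/\epsilon)\trel$ steps (a spectral $L^2$ estimate), and then squeezes $\E_{Q(\epsilon)}\tau_{Q(1-\epsilon)}$ between $t_\epsilon$ and a Chebyshev tail; that second-moment trick is essential. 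Second, for the upper bound the paper does not pass through separation or claim that TV is worst from an endpoint. Lemma~\ref{lem-tv-hitting-bound} bounds $\|P^t(k,\cdot)-\pi\|_{\mathrm{TV}}$ for an \emph{arbitrary} start $k$ by $\P_k(\max\{\tau_{Q(\epsilon)},\tau_{Q(1-\epsilon)}\}>t)+2\epsilon$ via a no-crossing coupling, and then uses that expected hitting times from $k$ are dominated by those from $\{0,n\}$. Routing through the fastest strong stationary time $\tau^{\ast}$ is dangerous here because $\E_0\tau^{\ast}$ equals $\sum_j(1-\theta_j)^{-1}$ over all nontrivial eigenvalues (morally $\E_0\tau_n$), which need not match the median hitting time $\tau$ to within $O(\sqrt{\trel\cdot\tmix})$; the paper explicitly warns that the TV and separation cutoff \emph{locations} can differ. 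Finally, note the paper's proof of Theorem~\ref{thm-1} from Theorem~\ref{thm-1-effective} also handles the non-cutoff regime $\trel\geq\epsilon^5\tmix(\tfrac14)$ via sub-multiplicativity of $\tmix$; you gesture at this at the end, and that part is fine.
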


As we later show, the above theorem extends to continuous-time chains, as well as to $\delta$-lazy chains, which satisfy $P(x,x) \geq \delta$ for all $x\in\Omega$.

The notion of a cutoff-window relates Theorem \ref{thm-1} to the cutoff phenomenon. A sequence $w_n$ is called a \emph{cutoff window} for a family of chains $(X_t^{(n)})$ if the following holds: $w_n =o\big(\tmix^{(n)}(\frac{1}{4})\big)$, and for any $\epsilon > 0$ there exists some $c_\epsilon > 0$ such that, for all $n$,
\begin{equation}\label{eq-window-defintion} \tmix^{(n)}(\epsilon) - \tmix^{(n)}(1-\epsilon) \leq c_\epsilon w_n~.\end{equation}
Equivalently, if $t_n$ and $w_n$ are two sequences such that $w_n =o(t_n)$, one may define that a sequence of chains exhibits cutoff at $t_n$ with window $w_n$ iff
$$\left\{\begin{array}
  {l}\lim_{\lambda\to\infty} \liminf_{n\to\infty} d_n(t_n - \lambda w_n) = 1~,\\
  \lim_{\lambda \to \infty} \limsup_{n\to\infty} d_n(t_n + \lambda w_n) = 0~.
\end{array}\right.$$
To go from the first definition to the second, take $t_n = \tmix^{(n)}(\frac{1}{4})$.

Once we compare the forms of \eqref{eq-thm-1-window-bound} and \eqref{eq-window-defintion}, it becomes clear that Theorem \ref{thm-1} implies a bound on the cutoff window for any general family of birth-and-death chains, provided that $\trel^{(n)} = o\big(\tmix^{(n)}(\frac{1}{4})\big)$.

Theorem \ref{thm-1} will be the key to establishing the criterion for total-variation cutoff in a general family of birth-and-death chains.

\subsection{Background}
The cutoff phenomenon was first identified for the case of random transpositions on the symmetric group in \cite{DiSh}, and for the case of random walks on the hypercube in \cite{Aldous}. It was given its name by Aldous and Diaconis in their famous paper \cite{AlDi} from 1985, where they showed that the top-in-at-random card shuffling process (repeatedly removing the top card and
reinserting it to the deck at a random position) has such a behavior. Saloff-Coste \cite{SaloffCoste} surveys the cutoff phenomenon for random walks on finite groups.

Though many families of chains are believed to exhibit cutoff, proving the occurrence of this phenomenon is often an extremely challenging task, hence there are relatively few examples for which cutoff has been rigorously shown. In 1996, Diaconis \cite{Diaconis} surveyed the cutoff phenomenon, and asked if one could determine whether or not it occurs in a given family of aperiodic and irreducible finite Markov chains.

In 2004, the third author \cite{Peres} observed that a necessary condition for cutoff in a family of reversible chains is that the product  $\tmix^{(n)}(\frac{1}{4}) \cdot \gap(n)$ tends to infinity with $n$, or equivalently, $\trel^{(n)} = o\big(\tmix^{(n)}(\frac{1}{4})\big)$; see Lemma \ref{lem-tmix-lower-bound}. The third author also conjectured that, in many natural classes of chains,
\begin{equation}
  \label{eq-cutoff-conj}
  \mbox{\emph{Cutoff occurs if and only if }$\trel^{(n)} = o\big(\tmix^{(n)}(\frac{1}{4})\big)$~.}
\end{equation}
In the general case, this condition does not always imply cutoff : Aldous \cite{Aldous2} and Pak (private communication via P. Diaconis) have constructed relevant examples (see also \cite{Chen},\cite{ChSa} and \cite{LPW}). This left open the question of characterizing the classes of chains for which \eqref{eq-cutoff-conj} holds.

One important class is the family of birth-and-death chains; see \cite{DiSa} for many natural examples of such chains. They also occur as the magnetization chain of the mean-field Ising Model (see \cite{DLP},\cite{LLP}).

In 2006, Diaconis and Saloff-Coste \cite{DiSa} verified a variant of the conjecture \eqref{eq-cutoff-conj} for birth-and-death chains, when the convergence
to stationarity is measured in \emph{separation}, that is, according to the decay of $\mathrm{sep}(\P_0(X_t \in \cdot),\pi)$, where
$\mathrm{sep}(\mu,\nu) = \sup_{x\in\Omega} (1-\frac{\mu(x)}{\nu(x)})$.
  Note that, although $\mathrm{sep}(\mu,\nu)$ assumes values in $[0,1]$, it is in fact not a metric (it is not even symmetric).
See, e.g., \cite{AF}*{Chapter 4} for the connections between mixing-times in total-variation and in separation.

More precisely, it was shown in \cite{DiSa} that any family of continuous-time birth-and-death chains, started at $0$, exhibits cutoff in separation if and only if $\trel^{(n)} = o\big(\tsep^{(n)}(\frac{1}{4};0)\big)$, where $\tsep(\epsilon;s) = \min\{t:\mathrm{sep}(\P_s(X_t\in \cdot),\pi)<\epsilon\}$. The proof used
a spectral representation of passage times \cites{KaMc,Keilson} and duality of strong stationary times. Whether \eqref{eq-cutoff-conj} holds with respect to the important and widely used total-variation distance, remained unsettled.

\subsection{Total-variation cutoff}
In this work, we verify the conjecture \eqref{eq-cutoff-conj} for arbitrary birth-and-death chains, with the convergence to stationarity measured in total-variation distance. Our first result, which is a direct corollary of Theorem \ref{thm-1}, establishes this for lazy discrete-time irreducible birth-and-death chains. We then derive versions of this result for continuous-time irreducible birth-and-death chains, as well as for $\delta$-lazy discrete chains (where $P(x,x)\geq \delta$ for all $x\in\Omega$). In what follows, we omit the dependence on $n$ wherever it is clear from the context. Here and throughout the paper, the abbreviation $\tmix$ stands for $\tmix\big(\frac14\big)$.

\begin{maincoro}\label{cor-lazy}
Let $(X^{(n)}_t)$ be a sequence of lazy irreducible birth-and-death chains. Then it exhibits cutoff in total-variation distance iff $\tmix^{(n)} \cdot \gap(n)$ tends to infinity with $n$.
Furthermore, the cutoff window size is at most the geometric mean between the mixing-time and
relaxation time.
\end{maincoro}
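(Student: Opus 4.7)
The plan is to deduce Corollary \ref{cor-lazy} by combining Theorem \ref{thm-1} (the window estimate) with the classical necessary condition for cutoff in reversible chains, recorded as Lemma \ref{lem-tmix-lower-bound}. I would first note that every irreducible birth-and-death chain is automatically reversible with respect to its stationary distribution---detailed balance can be solved recursively along the line---so both tools apply to every chain in the family.

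For the \emph{only if} direction I would argue by contradiction: if $\tmix^{(n)} \cdot \gap(n) \not\to \infty$, pass to a subsequence along which $\trel^{(n_k)} \geq c\, \tmix^{(n_k)}$ for some $c > 0$. Lemma \ref{lem-tmix-lower-bound} then forces $\tmix^{(n_k)}(\epsilon)$ to be at least of order $\trel^{(n_k)} \log(1/\epsilon)$, and for sufficiently small $\epsilon$ this prevents the ratio $\tmix^{(n_k)}(\epsilon)/\tmix^{(n_k)}(1-\epsilon)$ from tending to $1$, contradicting \eqref{eq-cutoff-def}.

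For the \emph{if} direction, I would assume $\trel^{(n)} = o(\tmix^{(n)})$ and set
$$w_n \deq \sqrt{\trel^{(n)} \cdot \tmix^{(n)}}~.$$
The hypothesis yields $w_n/\tmix^{(n)} = \sqrt{\trel^{(n)}/\tmix^{(n)}} \to 0$, so $w_n = o(\tmix^{(n)})$. Applying Theorem \ref{thm-1} then gives, for every fixed $\epsilon \in (0,\tfrac12)$,
$$\tmix^{(n)}(\epsilon) - \tmix^{(n)}(1-\epsilon) \leq c_\epsilon\, w_n = o(\tmix^{(n)})~.$$
A brief bootstrap converts this difference bound into the ratio bound required by \eqref{eq-cutoff-def}: choosing $\epsilon_0 = \min(\epsilon, \tfrac14)$ and reapplying the theorem at $\epsilon_0$, the monotonicity $\tmix^{(n)}(\epsilon_0) \geq \tmix^{(n)}$ (since $\epsilon_0 \leq \tfrac14$) yields $\tmix^{(n)}(1-\epsilon_0) \geq (1-o(1))\tmix^{(n)}$, hence also $\tmix^{(n)}(1-\epsilon) \geq (1-o(1))\tmix^{(n)}$. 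Dividing the displayed estimate by $\tmix^{(n)}(1-\epsilon)$ then forces $\tmix^{(n)}(\epsilon)/\tmix^{(n)}(1-\epsilon) \to 1$; the symmetric case $\epsilon \in (\tfrac12, 1)$ follows by swapping the roles of $\epsilon$ and $1-\epsilon$. The same estimates identify $w_n$ as a valid cutoff window in the sense of \eqref{eq-window-defintion}, yielding the stated bound on its size.

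The only serious input here is Theorem \ref{thm-1}; granted that theorem together with the classical reversible lower bound, the remainder is bookkeeping, the one mildly delicate point being the difference-to-ratio bootstrap just described.
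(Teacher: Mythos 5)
Your proposal follows essentially the same route as the paper: necessity via the reversible lower bound (Lemma \ref{lem-tmix-lower-bound}), sufficiency via Theorem \ref{thm-1} with $w_n = \sqrt{\trel^{(n)}\cdot\tmix^{(n)}}$ as the window. The only difference is that you spell out the difference-to-ratio bootstrap explicitly, whereas the paper dispatches the sufficiency direction with ``follows by definition from Theorem \ref{thm-1}''; your version is a correct filling-in of that bookkeeping, not a different argument.
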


As we will later explain, the given bound $\sqrt{\tmix \cdot \trel}$ for the cutoff window is essentially tight, in the following sense.
Suppose that the functions $t_M(n)$ and $t_R(n)\geq 2$ denote the mixing-time and relaxation-time of $(X_t^{(n)})$, a family of irreducible lazy
birth-and-death chains. Then there exists a family $(Y_t^{(n)})$ of such chains with the parameters $\tmix^{(n)}=(1+o(1)) t_M(n)$ and $\trel ^{(n)} = (1+o(1))t_R(n)$ that has a cutoff window of $(\tmix^{(n)} \cdot \trel^{(n)})^{1/2}$. In other words, no better bound on the cutoff window can be given without exploiting additional information on the chains.

Indeed, there are examples where additional attributes of the chain imply a cutoff window of order smaller than $\sqrt{\tmix \cdot \trel}$. For instance, the cutoff window has size $\trel$ for the Ehrenfest urn (see, e.g., \cite{DiMi}) and for the magnetization chain in the mean field Ising Model at high temperature (see \cite{DLP}).

Theorem \ref{thm-delta-lazy}, given in Section \ref{sec:cont-delta-lazy}, extends Corollary \ref{cor-lazy}
to the case of $\delta$-lazy discrete-time chains. We note that this is in fact the setting that corresponds to the magnetization chain in the mean-field Ising Model (see, e.g., \cite{LLP}).

Following is the continuous-time version of Corollary \ref{cor-lazy}.

\begin{maintheorem}\label{thm-cont}
Let $(X_t^{(n)})$ be a sequence of continuous-time birth-and-death chains. Then $(X_t^{(n)})$ exhibits cutoff in total-variation iff $\trel^{(n)} = o(\tmix^{(n)})$,
and the cutoff window size is at most $\sqrt{\tmix^{(n)}(\frac{1}{4}) \cdot \trel^{(n)}}$.
\end{maintheorem}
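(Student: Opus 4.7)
The plan is to reduce Theorem~\ref{thm-cont} to the lazy discrete-time result (Corollary~\ref{cor-lazy}) via the Poissonization of the continuous-time semigroup. Given $(X_t^{(n)})$ with generator $Q^{(n)}$, let $M_n \deq \max_i |Q^{(n)}_{ii}|$ and set
\begin{equation*}
\hat{P}^{(n)} \deq I + \frac{1}{2M_n}\, Q^{(n)},
\end{equation*}
which is easily seen to be a lazy irreducible birth-and-death transition matrix. The semigroup then satisfies
\begin{equation*}
e^{tQ^{(n)}} \;=\; \sum_{k\geq 0} e^{-2tM_n}\,\frac{(2tM_n)^k}{k!}\,\bigl(\hat{P}^{(n)}\bigr)^k,
\end{equation*}
so the distribution of $X_t^{(n)}$ coincides with that of $\hat{P}^{(n)}$ run for $N_t \sim \mathrm{Poisson}(2M_n t)$ steps.

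The two chains' parameters are tightly linked. From the relation $\lambda_i(Q^{(n)}) = 2M_n(\mu_i(\hat{P}^{(n)}) - 1)$ one reads off $\trel(\hat{P}^{(n)}) = 2M_n\,\trel^{(n)}$, and since $\mu_2(\hat{P}^{(n)})\in[0,1]$ one also has $\trel^{(n)} \geq 1/(2M_n)$. Standard Poisson concentration ($|N_t - 2M_n t| = O(\sqrt{M_n t})$ with high probability) translates the mixing times as
\begin{equation*}
\tmix^{(n)}(\epsilon) \;=\; \frac{1}{2M_n}\,\tmix(\hat{P}^{(n)};\epsilon) + O\!\left(\sqrt{\tmix^{(n)}(\epsilon)/M_n}\right),
\end{equation*}
so in particular $\trel^{(n)} = o(\tmix^{(n)})$ is equivalent to the analogous condition for $\hat{P}^{(n)}$. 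Applying Corollary~\ref{cor-lazy} to $\hat{P}^{(n)}$ yields cutoff with a discrete-time window of $\sqrt{\tmix(\hat{P}^{(n)})\,\trel(\hat{P}^{(n)})}$, which in continuous time becomes $\sqrt{\tmix^{(n)}\,\trel^{(n)}}$ after division by $2M_n$.

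The total-variation transfer uses $\|e^{tQ^{(n)}}\mu - \pi\|_\mathrm{TV} \leq \E\,\|(\hat{P}^{(n)})^{N_t}\mu - \pi\|_\mathrm{TV}$ for the upper bound, and a restriction of the Poisson sum to the $O(\sqrt{M_n t})$-window around its mean for the matching lower bound. The Poisson fluctuation in continuous-time units equals $\sqrt{M_n t}/(2M_n) = \sqrt{\tmix^{(n)}/(4M_n)}$ at $t=\tmix^{(n)}$, which is at most $\sqrt{\tmix^{(n)}\trel^{(n)}/2}$ by $\trel^{(n)}\geq 1/(2M_n)$, and so fits inside the target window. The converse direction (no cutoff if $\trel^{(n)}$ is not $o(\tmix^{(n)})$) is the general reversible-chain inequality (Lemma~\ref{lem-tmix-lower-bound}). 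The main obstacle is the Poisson-averaging step: it could in principle smear out the sharp drop established for the lazy chain, but because the Poisson spread is dominated by the lazy cutoff window, the sharp transition survives the averaging, and the claimed bound on the continuous-time cutoff window follows.
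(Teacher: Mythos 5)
Your proposal takes a genuinely different route from the paper. The paper does not reduce to the discrete-time result via uniformization; instead it re-runs the lazy discrete-time argument directly in continuous time, after first using $H_t = e^{t(P-I)} = e^{2t(\frac{P+I}{2}-I)} = \widetilde{H}_{2t}$ to pass to the lazy continuous chain. It then observes that only two ingredients need adjusting: Lemma~\ref{lem-tv-hitting-bound} (the no-crossing coupling becomes unnecessary because two continuous-time chains a.s.\ never jump across each other at the same instant), and Corollary~\ref{cor-quantile-hitting-concentration} (concentration of $\tau_{Q(1-\epsilon)}$ persists because the continuous hitting time is a concentrated sum of exponentials over a concentrated number of discrete steps). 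Your uniformization $e^{tQ}=\E\bigl[\hat P^{N_t}\bigr]$ with $\hat P = I + Q/(2M_n)$ lazy and $N_t\sim\mathrm{Poisson}(2M_nt)$, together with the parameter translation $\trel(\hat P)=2M_n\trel^{(n)}$ and $\trel^{(n)}\geq 1/(2M_n)$, is a clean alternative that makes the scaling completely explicit; the observation that the Poisson spread $\sqrt{t/(2M_n)}$ is dominated by $\sqrt{\tmix^{(n)}\trel^{(n)}}$ is exactly the right reason the averaging cannot destroy the cutoff. What the paper's route buys is that it never has to justify a lower-bound transfer under Poisson averaging; what your route buys is that Corollary~\ref{cor-lazy} can be applied almost verbatim and the two timescales are never mixed.

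One step is glossed over and should be made explicit: the lower bound. Jensen gives $\|\E[\hat P^{N_t}(x,\cdot)]-\pi\|_{\mathrm{TV}}\leq \E\|\hat P^{N_t}(x,\cdot)-\pi\|_{\mathrm{TV}}$, which is the wrong direction, so ``restricting the Poisson sum to the $O(\sqrt{M_nt})$-window'' does not by itself show that $d^{\mathrm{cont}}(t^-)$ stays near $1$, since in principle the mixture over $k$ could cancel. You need a structural fact to rule this out. The cleanest one here is the same one the paper uses for its discrete lower bound: take the distinguishing event $A=\{Q(\epsilon),\dots,n\}$ and note $\P^{\mathrm{cont}}_0(X_t\in A)\leq \P_0(\tau^{\mathrm{cont}}_{Q(\epsilon)}\leq t)$, where $\tau^{\mathrm{cont}}_{Q(\epsilon)}$ is a sum of $\tau^{\mathrm{disc}}_{Q(\epsilon)}$ i.i.d.\ $\mathrm{Exp}(2M_n)$ variables; both factors concentrate, so this probability is small at $t^-$, giving $d^{\mathrm{cont}}(t^-)\geq 1-\epsilon-o(1)$. (Equivalently, one can invoke stochastic monotonicity of $\hat P^k(0,\cdot)$ in $k$ for lazy birth-and-death chains, so $\hat P^k(0,A)-\pi(A)$ has a fixed sign and no cancellation occurs in the Poisson mixture.) With that addition your reduction is complete, and it matches the claimed window bound up to constants.
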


By combining our results with those of \cite{DiSa} (while bearing in mind the relation between the mixing-times in total-variation and in separation), one can relate worst-case total-variation cutoff in any continuous-time family of irreducible birth-and-death chains, to cutoff in separation started from $0$.
This suggests that total-variation cutoff should be equivalent to separation cutoff in such chains under the original definition of the worst starting point (as opposed to fixing the starting point at one of the endpoints). Indeed, it turns out that for any lazy or continuous-time birth-and-death chain, the separation is always attained by the two endpoints, as formulated by the next proposition.
\begin{mainprop}\label{prop-sep}
Let $(X_t)$ be a lazy (or continuous-time) birth-and-death chain with stationary distribution $\pi$. Then for every integer (resp. real) $t>0$, the separation $1-\P_x(X_t=y)/\pi(y)$ is maximized when $x,y$ are the endpoints.
\end{mainprop}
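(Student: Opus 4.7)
The proposition is equivalent to showing $q(x,y) := p^t(x,y)/\pi(y) \geq q(0,n)$ for every $x, y \in \{0,\ldots,n\}$. Since reversibility makes $q$ symmetric in its two arguments, it suffices to treat the case $x \leq y$.

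For the boundary cases ($x=0$ or $y=n$) I would invoke the standard monotone grand coupling for lazy (or continuous-time) birth-and-death chains, realizing chains $(X_t^{(a)})_{a=0,\ldots,n}$ on a common probability space with $X_t^{(a)} \leq X_t^{(a')}$ whenever $a \leq a'$. Because $\{X_t = n\}$ is an upper set, $a \mapsto p^t(a,n)$ is non-decreasing, which gives $q(x,n) \geq q(0,n)$; and because $\{X_t = 0\}$ is a lower set, $a \mapsto p^t(a,0)$ is non-increasing, which via reversibility is equivalent to $b \mapsto q(0,b)$ being non-increasing and hence gives $q(0,y) \geq q(0,n)$.

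For interior $0 < x \leq y < n$, I would apply the strong Markov property at $\tau_y := \inf\{s \geq 0 : X_s = y\}$ to obtain
\[
q(x,y;t) \;=\; \E_x\!\bigl[\,q(y,y;\,t-\tau_y)\,\mathbf{1}_{\{\tau_y \leq t\}}\,\bigr],
\]
and similarly $q(0,n;t) = \E_0\bigl[q(n,n;\,t-\tau_n)\,\mathbf{1}_{\{\tau_n \leq t\}}\bigr]$. Two properties enter: (i) under laziness or continuity the spectral expansion $q(z,z;u) = \sum_k e^{-\lambda_k u}\phi_k(z)^2$ has non-negative summands including $\phi_0(z)^2 = 1$, so $q(z,z;u) \geq 1$ and $u \mapsto q(z,z;u)$ is non-increasing; and (ii) in the grand coupling $\tau_y^{(x)} \leq \tau_y^{(0)} \leq \tau_n^{(0)}$ pathwise, since the $0$-chain must pass through $y$ before reaching $n$ and raising the starting point from $0$ to $x \leq y$ only hastens the hitting of $y$.

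The hard part is that the direct bound $q(x,y;t) \geq \P_x(\tau_y \leq t) \geq \P_0(\tau_n \leq t)$ obtained from (i) and (ii) is generally strictly weaker than $q(0,n;t)$, because the factor $q(n,n;\,t-\tau_n) \geq 1$ inflates the decomposition of $q(0,n;t)$ beyond $\P_0(\tau_n \leq t)$. To close this gap I would combine the hitting-time decomposition with the total-positivity-of-order-two property of $p^t$---preserved under powers (discrete time) or the semigroup (continuous time) for birth-and-death kernels---which yields log-supermodularity inequalities such as $q(x,y)\,q(n,n) \geq q(x,n)\,q(y,n)$ (and its twin at $0$) that convert the boundary bounds $q(x,n), q(y,n) \geq q(0,n)$ into an interior estimate. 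Symmetrizing these at the two endpoints and combining them with the non-increasing property of $u \mapsto q(z,z;u)$ on the event $\{\tau_n^{(0)} \leq t\}$ should then upgrade the naive estimate to the required $q(x,y;t) \geq q(0,n;t)$, completing the proof.
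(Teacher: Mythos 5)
Your treatment of the boundary cases ($x=0$ or $y=n$) is correct and matches the paper's Lemma~\ref{lem-sep-monotone-chain-endpoint}: monotonicity of $a\mapsto P^t(a,n)$ and $a\mapsto P^t(a,0)$ (via monotone coupling or the observation that a monotone one-step kernel preserves monotone functions) combined with reversibility gives $q(x,n)\geq q(0,n)$ and $q(0,y)\geq q(0,n)$.

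The interior case, however, has a genuine gap, and it is exactly the part you flag as ``hard.'' The hitting-time decomposition is fine, but the TP2 inequality you propose to close it, $q(x,y)\,q(n,n)\geq q(x,n)\,q(y,n)$, yields only $q(x,y)\geq q(x,n)\,q(y,n)/q(n,n)\geq q(0,n)^2/q(n,n)$. Since $q(n,n)\geq 1\geq q(0,n)$ (indeed, $q(n,n)\geq q(0,n)$ follows from the very row-monotonicity you used at the boundary), this lower bound is \emph{weaker} than $q(0,n)$, not stronger, so the TP2 route as stated cannot finish. The same obstacle appears for the other corner $q(0,0)q(x,y)\geq q(0,y)q(x,0)$. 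More fundamentally, TP2 of $P^t$ (which does hold, since the lazy one-step kernel is TP2 and TP2 is closed under products) controls sign changes up to order one, i.e.\ preservation of monotonicity, but it does not by itself control unimodality; that would require a TP3-type property. Your plan to ``symmetrize at both endpoints'' is left as a gesture and, in light of the above, does not have an obvious completion.

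What the paper does instead is establish directly that the lazy one-step kernel preserves \emph{unimodality} of non-negative functions (Lemma~\ref{lem-sep-lazy-chain-middle}, a short computation using $q_{m+1}+r_m\geq r_m\geq 1/2$). This implies all columns of $P^t$ are unimodal, hence for every starting state $s$ the likelihood ratio $y\mapsto P^t(s,y)/\pi(y)$ is unimodal and its minimum is attained at $y\in\{0,n\}$. One then applies reversibility to write $P^t(s,0)/\pi(0)=P^t(0,s)/\pi(s)$ and uses the boundary monotonicity to land at $P^t(0,n)/\pi(n)$. This unimodality step is the missing ingredient in your argument; it is where laziness (as opposed to mere monotonicity) is used in an essential way, as the paper's Example~\ref{example-monotone-worst-starting-pos} shows that the statement fails for monotone but non-lazy chains. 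I would recommend replacing the TP2 plan with a direct proof that the lazy kernel preserves unimodality, after which the interior case reduces to the boundary case you already handled.
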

That is, for such chains, the maximal separation from $\pi$ at time $t$ is simply $1-P^t(0,n)/\pi(n)$ (for the lazy chain with transition kernel $P$) or $1-H_t(0,n)/\pi(n)$ (for the continuous-time chain with heat kernel $H_t$).
As we later show, this implies the following corollary:
\begin{maincoro}\label{cor-tv-sep-worst}
For any continuous-time family of irreducible birth-and-death chains, cutoff in worst-case total-variation distance is equivalent to cutoff in worst-case separation.
\end{maincoro}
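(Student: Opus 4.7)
I would prove Corollary \ref{cor-tv-sep-worst} by combining Proposition \ref{prop-sep}, Theorem \ref{thm-cont}, and the separation cutoff criterion of Diaconis and Saloff-Coste \cite{DiSa}. Write $H_t$ for the continuous-time heat kernel, and set the worst-case separation $s_\star(t) \deq \max_{x,y}\bigl(1-H_t(x,y)/\pi(y)\bigr)$ and the separation from the endpoint $0$, $s_0(t) \deq \max_y\bigl(1-H_t(0,y)/\pi(y)\bigr)$.

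The first step is to argue $s_\star(t) = s_0(t)$ for every $t>0$. By Proposition \ref{prop-sep}, the maximum defining $s_\star(t)$ is attained at $\{x,y\}=\{0,n\}$, and by reversibility $\pi(0)H_t(0,n)=\pi(n)H_t(n,0)$, so both orderings $(0,n)$ and $(n,0)$ give the same value. Applying Proposition \ref{prop-sep} a second time with $x$ fixed at $0$ shows that the maximizing $y$ is $n$, hence $s_0(t)=1-H_t(0,n)/\pi(n)=s_\star(t)$. Consequently, worst-case separation cutoff is the same event as separation cutoff started from $0$, which by \cite{DiSa} is equivalent to $\trel^{(n)} = o\bigl(\tsep^{(n)}(\tfrac14;0)\bigr)$.

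By Theorem \ref{thm-cont}, TV cutoff is equivalent to $\trel^{(n)} = o(\tmix^{(n)})$, so the proof reduces to showing that $\tmix^{(n)}$ and $\tsep^{(n)}(\tfrac14;0)$ are comparable up to universal constants for continuous-time reversible chains. One direction, $\tmix \leq \tsep(\tfrac14;0)$, is immediate from $d(t) \leq s_0(t)$. For the reverse direction, the classical comparison for reversible chains gives $s_\star(2t) \leq 1-(1-\bar d(t))^2 \leq 4d(t)$, where $\bar d(t) \deq \max_{x,y}\|H_t(x,\cdot)-H_t(y,\cdot)\|_{\mathrm{TV}} \leq 2d(t)$; this comes from writing $H_{2t}(x,y)/\pi(y) = \sum_z H_t(x,z)H_t(y,z)/\pi(z)$ via detailed balance and Cauchy--Schwarz. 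Combined with submultiplicativity of $\bar d$ (giving $\tmix(2^{-k}) \leq k\,\tmix$, hence $\tmix(1/16) \leq 4\tmix$), this yields $\tsep(\tfrac14;0) \leq C\tmix$ for a universal constant $C$.

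Combining the above, $\trel^{(n)} = o(\tmix^{(n)})$ iff $\trel^{(n)} = o(\tsep^{(n)}(\tfrac14;0))$, which is exactly the equivalence claimed. The main obstacle is Proposition \ref{prop-sep}; once it is in hand, Corollary \ref{cor-tv-sep-worst} is a routine synthesis of that proposition with Theorem \ref{thm-cont}, the Diaconis--Saloff-Coste criterion, and standard reversibility-based comparisons between the total-variation and separation mixing metrics.
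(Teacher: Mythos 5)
Your proposal is correct and follows essentially the same route as the paper: reduce worst-case separation to separation from an endpoint via Proposition~\ref{prop-sep}, invoke the Diaconis--Saloff-Coste criterion for separation cutoff from $0$, invoke Theorem~\ref{thm-cont} for the TV criterion, and then close the loop by the standard reversible-chain comparison $\bar{d}(t)\leq d_{\mathrm{sep}}(t)$ and $d_{\mathrm{sep}}(2t)\leq 1-(1-\bar d(t))^2$ together with sub-multiplicativity of $\bar d$. The only cosmetic difference is that the paper phrases the comparison in terms of $\tsep(\tfrac14)=\max_x\tsep(\tfrac14;x)$ (quoting Lemma~7 of \cite{AF}*{Chapter~4} directly and obtaining $\tfrac18\tsep(\tfrac14)\leq\tmix(\tfrac14)\leq\tsep(\tfrac14)$), whereas you work with $\tsep(\tfrac14;0)$ and note explicitly that Proposition~\ref{prop-sep} identifies the two; also, your claim that ``$d(t)\leq s_0(t)$'' is \emph{immediate} is a slight overstatement, since it already uses Proposition~\ref{prop-sep} to know that worst-case separation is attained from the endpoint---but you did establish that in your first step, so the argument is sound.
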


Note that, clearly, the above equivalence is in the sense that one cutoff implies the other, yet the cutoff locations need not be equal (and sometimes indeed are not equal, e.g., the Bernoulli-Laplace models, surveyed in \cite{DiSa}*{Section 7}).

The rest of this paper is organized as follows.
The proofs of Theorem \ref{thm-1} and Corollary \ref{cor-lazy} appear in Section \ref{sec:mainproof}. Section \ref{sec:cont-delta-lazy} contains the proofs of the variants of Theorem \ref{thm-1} for the continuous-case (Theorem \ref{thm-cont}) and the $\delta$-lazy case. In Section \ref{sec:separation}, we discuss separation in general birth-and-death chains, and provide the proofs of Proposition \ref{prop-sep} and Corollary \ref{cor-tv-sep-worst}.
The final section, Section \ref{sec:conclusion}, is devoted to concluding remarks and open problems.

\section{Cutoff in lazy birth-and-death chains}\label{sec:mainproof}
In this section we prove the main result, which shows that the condition $\gap \cdot \tmix \to\infty$ is necessary and sufficient for total-variation cutoff in lazy birth-and-death chains.

\subsection{Proof of Corollary \ref{cor-lazy}}
The fact that any family of lazy irreducible birth-and-death chains satisfying
$\tmix \cdot \gap \to\infty$ exhibits cutoff, follows by definition from Theorem \ref{thm-1}, as does the bound $\sqrt{\trel\cdot\tmix}$ on the cutoff window size.

It remains to show that this condition is necessary for cutoff; this is known to hold for any family of reversible Markov chains,  using a straightforward and well known lower bound on $\tmix$ in terms of $\trel$ (cf., e.g., \cite{LPW}). We include its proof for the sake of completeness.
\begin{lemma}\label{lem-tmix-lower-bound}
Let $(X_t)$ denote a reversible Markov chain, and suppose that $\trel \geq 1 + \theta \tmix(\frac{1}{4})$ for some fixed $\theta > 0$. Then for any $0 < \epsilon < 1$
\begin{equation}\label{eq-tmix-lower-bound-tmix}\tmix(\epsilon) \geq \tmix(\mbox{$\frac{1}{4}$}) \cdot \theta \log(1/2\epsilon)~.\end{equation} In particular, $\tmix(\epsilon) / \tmix(\frac{1}{4}) \geq K$ for all $K > 0$ and $\epsilon < \frac{1}{2}\exp(-K/\theta)$.
\end{lemma}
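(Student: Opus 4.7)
The plan is to follow the standard spectral lower-bound route. By reversibility, $P$ is self-adjoint in $L^2(\pi)$, so I would pick a right eigenfunction $f$ corresponding to a nontrivial eigenvalue $\lambda$ with $|\lambda|=1-\gap$, normalized by $\|f\|_\infty=1$. Orthogonality to the constant eigenfunction forces $\pi(f)=0$, so at any state $x$ realizing $|f(x)|=1$,
$$|\lambda|^t = |P^t f(x)| = \Big|\sum_y \big(P^t(x,y)-\pi(y)\big)f(y)\Big| \leq 2\|P^t(x,\cdot)-\pi\|_\mathrm{TV} \leq 2d(t),$$
which yields the spectral lower bound $d(t)\geq \tfrac{1}{2}(1-\gap)^t$.

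Next, I would convert this into an exponential estimate via the elementary inequality $\log(1-x)\geq -x/(1-x)$ applied at $x=\gap=1/\trel$, obtaining $d(t)\geq \tfrac{1}{2}\exp(-t/(\trel-1))$. Setting $t=\tmix(\epsilon)$ and using $d(t)\leq \epsilon$ immediately produces the intermediate estimate $\tmix(\epsilon)\geq (\trel-1)\log(1/(2\epsilon))$.

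Finally, the hypothesis $\trel\geq 1+\theta\tmix(\tfrac{1}{4})$ simplifies to $\trel-1\geq \theta\tmix(\tfrac{1}{4})$, and substituting into the previous inequality produces \eqref{eq-tmix-lower-bound-tmix}. The \emph{in particular} clause is then immediate, since $\epsilon<\tfrac{1}{2}e^{-K/\theta}$ gives $\log(1/(2\epsilon))>K/\theta$, hence $\tmix(\epsilon)/\tmix(\tfrac{1}{4})\geq K$. There is no serious obstacle—the argument is entirely standard—though it is worth noting that only $|\lambda|^t$ ever enters the computation, so the sign of the extremal nontrivial eigenvalue is irrelevant and laziness need not be invoked for this particular lemma.
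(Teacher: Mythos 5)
Your proof is correct and follows essentially the same route as the paper: the spectral lower bound $d(t)\geq\tfrac12|\lambda|^t$ via an extremal eigenfunction orthogonal to $\one$, the elementary inequality $\log(1/\lambda)\leq\lambda^{-1}-1$ (equivalent to your $\log(1-x)\geq -x/(1-x)$), and substitution of the hypothesis $\trel-1\geq\theta\,\tmix(\tfrac14)$. Your closing observation that laziness plays no role here is also consistent with the paper, which states the lemma for general reversible chains.
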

\begin{proof}
Let $P$ denote the transition kernel of $X$, and recall that the fact that $X$ is reversible implies that $P$ is a symmetric operator with respect to $\left<\cdot,\cdot\right>_\pi$ and $\one$ is an eigenfunction corresponding to the trivial eigenvalue $1$.

Let $\lambda$ denote the largest absolute-value of all nontrivial eigenvalues of $P$, and let $f$ be the corresponding
eigenfunction, $P f = \pm \lambda f$, normalized to have $\|f\|_\infty=1$. Finally, let $r$ be the state attaining $|f(r)|=1$. Since $f$ is orthogonal to $\one$, it follows that for any $t$,
\begin{align*}\lambda^t &= \left|(P^t f)(r) - \left<f,\one\right>_\pi\right| \leq \max_{x\in\Omega} \Big| \sum_{y\in \Omega} P^t(x,y) f(y) - \pi(y) f(y) \Big| \\
&\leq \|f\|_\infty  \max_{x\in\Omega} \|P^t(x,\cdot)-\pi\|_1 = 2 \max_{x\in\Omega} \|P^t(x,\cdot)-\pi\|_{\mathrm{TV}}~.\end{align*}
Therefore, for any $0 < \epsilon < 1$ we have
\begin{equation}\label{eq-tmix-lower-bound-trel}\tmix(\epsilon) \geq \log_{1/\lambda} (1/2\epsilon) \geq \frac{\log(1/2\epsilon)}{\lambda^{-1}-1} = (\trel -1)\log(1/2\epsilon)~,\end{equation}
and \eqref{eq-tmix-lower-bound-tmix} immediately follows.
\end{proof}
This completes the proof of Corollary \ref{cor-lazy}.\qed

\subsection{Proof of Theorem \ref{thm-1}}
The essence of proving the theorem lies in the treatment of the regime where $\trel$ is much smaller than $\tmix(\frac{1}{4})$.
\begin{theorem}\label{thm-1-effective}
Let $(X_t)$ denote a lazy irreducible birth-and-death chain, and suppose that $\trel < \epsilon^5 \cdot \tmix(\frac{1}{4})$ for some $0 < \epsilon < \frac{1}{16}$.
Then $$\tmix(4\epsilon) - \tmix(1-2\epsilon) \leq (6/\epsilon)\sqrt{ \trel \cdot \tmix(\mbox{$\frac{1}{4}$})}~.$$
\end{theorem}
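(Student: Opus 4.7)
The approach is to reduce total-variation mixing of a lazy birth-and-death chain to the concentration of a single hitting-time random variable, and to bound the latter via the Keilson-style spectral representation of hitting times. Let $m$ denote a median of the stationary distribution $\pi$, and let $\tau_m$ be the first hitting time of $m$ by the chain started at the endpoint $0$ (the case of $n$ being symmetric). By Proposition~\ref{prop-sep} (or by a direct monotone-coupling argument), it suffices to estimate $d_0(t):=\|\P_0(X_t\in\cdot)-\pi\|_{\mathrm{TV}}$ from an endpoint. The central goal is a two-sided approximation of the form
\begin{equation*}
\bigl|d_0(t)-\P_0(\tau_m>t)\bigr|\;\leq\;O(\epsilon)\quad\text{for $t$ in the relevant window}.
\end{equation*}

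\textbf{Concentration of $\tau_m$ via Keilson.} By the theorem of Keilson in continuous time and its lazy discrete-time analogue, the hitting time $\tau_m$ from an endpoint of a birth-and-death chain is distributed as a sum of independent geometric random variables, one for each eigenvalue $\mu_k\in(0,1)$ of the sub-stochastic kernel on $\{0,\ldots,m-1\}$ obtained by absorbing at $m$. Since each $1-\mu_k\geq\gap$, the variance of the $k$-th summand is at most $\trel$ times its mean, and summing yields $\var(\tau_m)\leq\trel\cdot\E[\tau_m]$. Under the standing hypothesis $\trel\leq\epsilon^5\tmix$, standard estimates force $\E[\tau_m]=\Theta(\tmix)$, so $\sd(\tau_m)=O(\sqrt{\trel\cdot\tmix})$. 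Chebyshev's inequality then implies that $t\mapsto\P_0(\tau_m>t)$ transitions from a value $\geq 1-\epsilon$ to a value $\leq\epsilon$ over a time interval of length at most $O\bigl(\sqrt{\trel\cdot\tmix}/\sqrt\epsilon\bigr)$.

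\textbf{From hitting times to TV.} The lower bound $d_0(t)\geq\P_0(\tau_m>t)-\tfrac12$ is immediate: $\{\tau_m>t\}\subseteq\{X_t\leq m-1\}$ and $\pi([0,m-1])\leq\tfrac12$. The delicate direction is the upper bound, which says that once the chain has hit $m$ it has essentially mixed. My plan for this is a two-sided coupling: run one chain from $0$ and an independent one from $n$ until each hits $m$, then merge them using monotonicity of the birth-and-death kernel; the coupling time is bounded by $\tau_m^{(0)}\vee\tau_m^{(n)}$, and the Keilson-type concentration of Step~2 applies equally well to both endpoints. Combined with the fact that the stationary measure $\pi$ can be realized as a mixture of the two endpoint distributions via its median decomposition, this gives $d_0(t)\leq\P_0(\tau_m>t)+O(\epsilon)$.

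\textbf{Main obstacle and conclusion.} The hardest step is pinning down the upper bound in Step~3 with the precise additive $O(\epsilon)$ slack, since after hitting $m$ the chain has mixed on one side but not obviously on the other; the slack $\trel\leq\epsilon^5\tmix$ is consumed here, with one factor of $\epsilon$ absorbed at each application of Markov or Chebyshev in the coupling. Once the two-sided approximation $|d_0(t)-\P_0(\tau_m>t)|\leq c\epsilon$ is established, combining it with the window bound of Step~2 for the crossing of $\P_0(\tau_m>t)$ through the interval $[\epsilon,1-\epsilon]$ translates, after carefully chasing constants, into the desired
\begin{equation*}
\tmix(4\epsilon)-\tmix(1-2\epsilon)\;\leq\;(6/\epsilon)\sqrt{\trel\cdot\tmix(\tfrac14)}.
\end{equation*}
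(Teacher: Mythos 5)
Your high-level plan — use Keilson/Fill's spectral representation of hitting times to concentrate the hitting time of a fixed target, then translate this concentration into a bound on the TV window via a coupling — does follow the paper's strategy. However, there are two substantive errors that would derail the argument.

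\textbf{The variance bound is wrong as stated.} You assert that the eigenvalues $\mu_k$ of the sub-stochastic kernel on $\{0,\ldots,m-1\}$ satisfy $1-\mu_k\geq\gap$, so that $\var(\tau_m)\leq\trel\cdot\E[\tau_m]$ with $\trel$ the relaxation time of the \emph{original} chain. This is false. Cauchy interlacing only bounds the top eigenvalue of the principal submatrix from \emph{below} by eigenvalues of the full matrix, so the spectral gap of the absorbing chain can be strictly smaller than $\gap$. A concrete counterexample: take the lazy walk on $\{0,1,2\}$ with $\pi=(\tfrac14,\tfrac12,\tfrac14)$ and $P(x,x)=\tfrac12$; then $\gap=\tfrac34$, yet absorbing at the median $m=1$ leaves a $1\times1$ block $(1/2)$ with gap $\tfrac12<\gap$. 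The paper circumvents this with a separate Rayleigh-quotient argument (Lemma~\ref{lem-gap-gap-inequality}), showing only $\gap|_{[0,\ell]}\geq\epsilon\cdot\gap$ when $\ell=Q(1-\epsilon)$, which degrades by a factor $\epsilon$ that propagates into all subsequent Chebyshev bounds. Without this lemma the variance estimate, and hence the window size, are unjustified.

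\textbf{The median gives too weak a lower bound on TV.} Your lower bound $d_0(t)\geq\P_0(\tau_m>t)-\tfrac12$ can never exceed $\tfrac12$, so it cannot certify $d_0(t)\geq 1-2\epsilon$, which is what is needed to lower-bound $\tmix(1-2\epsilon)$. The statement you are proving has a $1-2\epsilon$ on the left, so the median $Q(\tfrac12)$ is the wrong target for this direction. The paper's lower bound uses the $\epsilon$-quantile $Q(\epsilon)$: $d_0(t)\geq(1-\epsilon)-\P_0(\tau_{Q(\epsilon)}\leq t)$, which can indeed reach $1-2\epsilon$. Then one needs a separate lemma (Lemma~\ref{lem-hitting-ratio}) comparing $\E_0\tau_{Q(\epsilon)}$, $\E_0\tau_{Q(\tfrac12)}$ and $\E_0\tau_{Q(1-\epsilon)}$ and showing they differ by $O\bigl(\epsilon^{-1}\sqrt{\trel\cdot\tmix}\bigr)$, so that one reference point suffices for the window. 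Your proposal has no analogue of this step; the phrase ``the stationary measure $\pi$ can be realized as a mixture of the two endpoint distributions'' does not provide the needed quantitative comparison of hitting times at different quantiles, and the coupling argument needs to explicitly bound the probability that two paths started at opposite ends cross between two quantiles before time $t$ (which is what the paper's no-crossing coupling in Lemma~\ref{lem-tv-hitting-bound} accomplishes).

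As a smaller point, Proposition~\ref{prop-sep} concerns separation distance, not TV, and does not directly license the reduction to endpoint start; for TV the paper instead observes that for any target state, one of the endpoints maximizes the expected hitting time, and feeds this into a coupling-based TV bound valid from arbitrary starting states.
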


\begin{proof}[\textbf{\emph{Proof of Theorem \ref{thm-1}}}]
To prove Theorem \ref{thm-1} from Theorem \ref{thm-1-effective}, let $\epsilon > 0$, and suppose first that $\trel < \epsilon^5 \cdot \tmix(\frac{1}{4})$. If $\epsilon < \frac{1}{64}$, then the above theorem clearly implies
that \eqref{eq-thm-1-window-bound} holds for $c_\epsilon=24/\epsilon$. Since that the left-hand-side of \eqref{eq-thm-1-window-bound} is monotone decreasing in $\epsilon$, this result extends to any value of $\epsilon < \frac{1}{2}$ by choosing $$c_1=c_1(\epsilon) = 24 \max\{ 1/\epsilon, 64\}~.$$
It remains to treat the case where $\trel \geq \epsilon^5 \cdot \tmix(\frac{1}{4})$. In this case, the sub-multiplicativity of the mixing-time (see, e.g., \cite{AF}*{Chapter 2}) gives
\begin{equation}\tmix(\epsilon) \leq \tmix(\mbox{$\frac{1}{4}$})\lceil\mbox{$\frac{1}{2}$}\log_2(1/\epsilon)\rceil\quad \mbox{for any $0<\epsilon < \frac{1}{4}$}~.\end{equation}
In particular, for $\epsilon < \frac{1}{4}$ our assumption on $\trel$ gives
$$\tmix(\epsilon) - \tmix(1-\epsilon) \leq \tmix(\epsilon) \leq
\epsilon^{-5/2} \log_2(1/\epsilon) \sqrt{\trel \cdot \tmix(\mbox{$\frac{1}{4}$})} ~.$$
Therefore, a choice of $$c_2 = c_2(\epsilon) = \max\{ \log_2(1/\epsilon) / \epsilon^{5/2}, 64\}$$ gives \eqref{eq-thm-1-window-bound} for any $\epsilon < \frac{1}{2}$ (the case $\epsilon > \frac{1}{4}$ again follows from monotonicity).

Altogether, a choice of $c_\epsilon = \max\{c_1, c_2\}$ completes the proof.
\end{proof}

In the remainder of this section, we provide the proof of Theorem \ref{thm-1-effective}. To this end, we must first establish several lemmas.

Let $X=X(t)$ be the given (lazy irreducible) birth-and-death chain, and from now on, let $\Omega_n=\{0,\ldots,n\}$ denote its state space. Let $P$ denote the transition kernel of $X$, and let $\pi$ denote its stationary distribution.
Our first argument relates the mixing-time of the chain, starting from various starting positions,
with its hitting time from $0$ to certain quantile states, defined next.
\begin{equation}\label{def-quantile-0}
Q(\epsilon) \deq \min\Big\{k: \sum_{j=0}^{k}\pi(j)\geq
\epsilon\Big\}~,\quad\mbox{where }0<\epsilon<1~.
\end{equation}
Similarly, one may define the hitting times from $n$ as follows:
\begin{equation}\label{def-quantile-n}
\tilde{Q}(\epsilon) \deq \max\Big\{k: \sum_{j=k}^{n}\pi(j)\geq
\epsilon\Big\}~,\quad\mbox{where }0<\epsilon<1~.
\end{equation}
\begin{remark*}
Throughout the proof, we will occasionally need to shift from $Q(\epsilon)$ to $\tilde{Q}(1-\epsilon)$, and vice versa. Though the proof can be written in terms of $Q,\tilde{Q}$, for the sake of simplicity it will be easier to have
the symmetry \begin{equation}\label{eq-Q-symmetry}Q(\epsilon) = \tilde{Q}(1-\epsilon)\mbox{ for almost any }\epsilon > 0~.\end{equation} This is easily achieved by noticing that at most $n$ values of $\epsilon$ do not satisfy \eqref{eq-Q-symmetry} for a given chain $X(t)$ on $n$ states. Hence, for any given countable family of chains, we can eliminate a countable set of all such problematic values of $\epsilon$ and obtain the above mentioned symmetry.
\end{remark*}
Recalling that we defined $\P_k$ to be the probability on the event that the starting position is $k$,
we define $\E_k$ and $\var_k$ analogously. Finally, here and in what follows, let $\tau_k$ denote the hitting-time of the state $k$, that is, $\tau_k\deq \min \{t: X(t)=k\}$.
\begin{lemma}\label{lem-tv-hitting-bound}
For any fixed $0 < \epsilon < 1$ and lazy irreducible birth-and-death chain $X$, the following holds for any $t$:
\begin{align}
  \label{eq-tv-from-0-hitting-bound}
  \| P^t(0,\cdot) - \pi \|_{\mathrm{TV}} &\leq \P_0(\tau_{Q(1-\epsilon)} > t) + \epsilon~,
\end{align}
and for all $k \in \Omega_n$,
\begin{align} \label{eq-tv-from-k-hitting-bound}
\| P^t(k,\cdot) - \pi \|_{\mathrm{TV}} &\leq \P_k(\max\{\tau_{Q(\epsilon)},\tau_{Q(1-\epsilon)}\} > t) + 2\epsilon~.
\end{align}
\end{lemma}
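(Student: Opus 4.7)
My plan is to establish both inequalities via coupling, exploiting the monotone coupling available for lazy birth-and-death chains. Throughout, set $q \deq Q(1-\epsilon)$, so that by the definition of $Q$ we have $\pi(\{q+1,\ldots,n\}) \leq \epsilon$.

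For \eqref{eq-tv-from-0-hitting-bound}, I will use the coupling representation of total-variation distance: it suffices to construct a coupling $(X, Y)$ with $X(0) = 0$ and $Y(0) \sim \pi$ satisfying $\P(X(t) \neq Y(t)) \leq \P_0(\tau_q > t) + \epsilon$. First I sample $Y(0)$ from $\pi$ and split on whether $Y(0) > q$. The event $\{Y(0) > q\}$ has probability at most $\epsilon$ and I absorb it into the additive $\epsilon$ term by allowing the coupling to fail there. On the complementary event $\{Y(0) \leq q\}$ I run $(X, Y)$ under a monotone (Holley-type) coupling that preserves $X(s) \leq Y(s)$ for all $s$ and glues the chains upon meeting.

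The key claim, which I expect to be the main obstacle, is that under this coupling, on the event $\{Y(0) \leq q\} \cap \{\tau_q^X \leq t\}$ the two chains must have coalesced by time $\tau_q^X$, and hence $X(t) = Y(t)$. While monotonicity automatically forces $Y(\tau_q^X) \geq q$, the chain $Y$ could a priori climb above $q$ before $X$ reaches it and so delay the meeting. To rule this out, the coupling must be crafted so that any upward step of $Y$ from level $q$ is matched by an upward step of $X$; the laziness of the birth-and-death chain supplies the slack needed for such coordination. Granting this claim, a union bound yields $\P(X(t) \neq Y(t)) \leq \epsilon + \P_0(\tau_q > t)$, which is \eqref{eq-tv-from-0-hitting-bound}.

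The inequality \eqref{eq-tv-from-k-hitting-bound} follows from the same scheme, now starting from an arbitrary state $k$. I condition on $Y(0)$ lying in the ``bulk'' $\{Q(\epsilon)+1, \ldots, Q(1-\epsilon)\}$, which carries $\pi$-mass at least $1 - 2\epsilon$ and accounts for the additive $2\epsilon$. On this event I sandwich $X$ (started at $k$) between two auxiliary copies of the chain started respectively at $Q(\epsilon)$ and $Q(1-\epsilon)$, coupled monotonically to $X$; once $X$ has hit both of these quantile states the sandwich closes and coalescence with $Y$ follows by essentially the same argument as above. The resulting coalescence-failure event is contained in $\{\max(\tau_{Q(\epsilon)}, \tau_{Q(1-\epsilon)}) > t\}$, producing the stated bound.
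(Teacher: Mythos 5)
Your overall shape --- couple $X$ started at $0$ with a stationary copy $Y$, use a no-crossing/order-preserving coupling, and use the quantile $q = Q(1-\epsilon)$ to account for an $\epsilon$ error --- agrees with the paper, but the conditioning step is wrong, and the patch you propose does not close the gap.

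You decompose on $\{Y(0) \le q\}$, absorb $\{Y(0) > q\}$ into the additive $\epsilon$, and then try to argue that on $\{Y(0) \le q\}$ coalescence has occurred by time $\tau_q^X$. You correctly identify the obstacle: $Y$ may wander above $q$ before $X$ reaches it. Your proposed fix --- arrange the coupling so that any upward step of $Y$ from level $q$ is matched by an upward step of $X$ --- does not work. First, such matching requires $p_q \leq p_x$ where $x<q$ is the current state of $X$, and the birth probabilities of a lazy birth-and-death chain can certainly satisfy $p_q > p_x$; laziness only gives $p_i \le 1/2$ and imposes no relation between $p_q$ and $p_x$. Second, even granting the match, it does not help: $Y$ moving $q \to q+1$ while $X$ moves $x \to x+1$ still leaves $Y$ strictly above $q$ and $X$ far below, so $Y$ can be above $q$ at time $\tau_q^X$ and the chains need not have met. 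Thus the containment of $\{Y(0)\le q,\ \tau_q^X\le t\}$ in $\{X(t)=Y(t)\}$ is not established, and that containment is the heart of the lemma.

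The paper avoids the problem by conditioning on the position of the stationary copy at the \emph{random} time $\tau_q$, not at time $0$. Under the fair-coin no-crossing coupling (at each step a fair coin decides which of the two chains takes a step, so at most one moves per step and they cannot swap sides without first meeting), the copy $\tilde X$ started from $\pi$ is stationary at every time $t$ conditioned on the number of coin tosses it has received; and since $\tau_q$ is a function only of the coins and of $X$'s increments, both independent of $\tilde X$'s increments, one gets $\tilde X(\tau_q)\sim \pi$ exactly, hence $\P\bigl(\tilde X(\tau_q)\le q\bigr)\ge 1-\epsilon$. Combined with $X(0)=0\le \tilde X(0)$, $X(\tau_q)=q\ge \tilde X(\tau_q)$, and the no-crossing property, this forces coalescence by $\tau_q$ on an event of probability at least $1-\epsilon$. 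This observation --- that the independence structure of the fair-coin coupling makes the stationary copy remain stationary at the stopping time $\tau_q$ --- is exactly the ingredient your proof is missing, and the same gap propagates to your sketch for \eqref{eq-tv-from-k-hitting-bound}, which needs the analogous facts at both $\tau_{Q(\epsilon)}$ and $\tau_{Q(1-\epsilon)}$.
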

\begin{proof}
Let $X$ denote an instance of the lazy birth-and-death chain starting from a given state $k$, and let $\tX$ denote another instance of the lazy chain starting from the stationary distribution. Consider the following \emph{no-crossing} coupling of these two chains: at each step, a fair coin toss decides which of the two chains moves according to its original (non-lazy) rule. Clearly, this coupling does not allow the two chains to cross one another without sharing the same state first (hence the name for the coupling). Furthermore, notice that by definition, each of the two chains, given the number of coin tosses that went its way, is independent of the other chain. Finally, for any $t$, $\tX(t)$, given the number of coin tosses that went its way until time $t$, has the stationary distribution.

In order to deduce the mixing-times bounds, we show an upper bound on the time it takes $X$ and $\tX$ to coalesce. Consider the hitting time of $X$ from $0$ to $Q(1-\epsilon)$, denoted by $\tau_{Q(1-\epsilon)}$. By the above argument, $\tX(\tau_{Q(1-\epsilon)})$ enjoys the stationary distribution, hence by the definition of $Q(1-\epsilon)$,
$$\P\left(\tX(\tau_{Q(1-\epsilon)}) \leq X(\tau_{Q(1-\epsilon)})\right) \geq 1-\epsilon~.$$
Therefore, by the property of the no-crossing coupling, $X$ and $\tX$ must have coalesced by time $\tau_{Q(1-\epsilon)}$ with probability at least $1-\epsilon$. This implies \eqref{eq-tv-from-0-hitting-bound}, and it remains to prove \eqref{eq-tv-from-k-hitting-bound}. Notice that the above argument involving the no-crossing coupling, this time with $X$ starting from $k$, gives
$$\P\left(\tX(\tau_{Q(\epsilon)}) \geq X(\tau_{Q(\epsilon)})\right) \geq 1-\epsilon~,$$
and similarly,
$$\P\left(\tX(\tau_{Q(1-\epsilon)}) \leq X(\tau_{Q(1-\epsilon)})\right) \geq 1-\epsilon~.$$
Therefore, the probability that $X$ and $\tX$ coalesce between the times $\tau_{Q(\epsilon)}$ and $\tau_{Q(1-\epsilon)}$ is at least $1-2\epsilon$, completing the proof.
\end{proof}

\begin{corollary}\label{cor-mixing-time-order}
Let $X(t)$ be a lazy irreducible birth-and-death chain on $\Omega_n$. The following holds for any $0 < \epsilon < \frac{1}{16}$:
\begin{equation}\label{eq-mixing-order-hitting-bound}
 \tmix(\mbox{$\frac{1}{4}$}) \leq 16 \max\left\{\E_0\tau_{Q(1-\epsilon)},\E_n\tau_{Q(\epsilon)}\right\}~.
\end{equation}
\end{corollary}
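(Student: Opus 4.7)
The plan is to combine the total-variation bound \eqref{eq-tv-from-k-hitting-bound} of Lemma~\ref{lem-tv-hitting-bound} with a Markov-inequality argument and the standard monotone coupling of birth-and-death chains. Set $T \deq \max\{\E_0 \tau_{Q(1-\epsilon)}, \E_n \tau_{Q(\epsilon)}\}$ and $\tau^* \deq \max\{\tau_{Q(\epsilon)}, \tau_{Q(1-\epsilon)}\}$. The goal is to show $\E_k[\tau^*] \leq 2T$ for every $k \in \Omega_n$; once this is in hand, Markov's inequality gives $\P_k(\tau^* > 16T) \leq \tfrac{1}{8}$, and since the hypothesis $\epsilon < \tfrac{1}{16}$ forces $2\epsilon < \tfrac{1}{8}$, \eqref{eq-tv-from-k-hitting-bound} yields
$$\|P^{16T}(k,\cdot)-\pi\|_{\mathrm{TV}} \leq \tfrac{1}{8} + 2\epsilon < \tfrac{1}{4}$$
uniformly in $k$. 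Taking the maximum over $k$ gives $\tmix(\tfrac{1}{4}) \leq 16T$, which is exactly \eqref{eq-mixing-order-hitting-bound}.

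To bound $\E_k[\tau^*]$, I would split on the position of $k$ relative to the quantiles $Q(\epsilon) \leq Q(1-\epsilon)$. If $k \leq Q(\epsilon)$, then any trajectory from $k$ to $Q(1-\epsilon)$ must first pass through $Q(\epsilon)$, so $\tau^* = \tau_{Q(1-\epsilon)}$ almost surely; symmetrically, if $k \geq Q(1-\epsilon)$ then $\tau^* = \tau_{Q(\epsilon)}$. For the middle range $Q(\epsilon) < k < Q(1-\epsilon)$, I would use the trivial bound $\tau^* \leq \tau_{Q(\epsilon)} + \tau_{Q(1-\epsilon)}$.

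The final ingredient is the classical monotone coupling for birth-and-death chains: two copies of the chain started at ordered initial states can be coupled so as to remain ordered at every step, since each transition changes the state by at most one. This gives the stochastic dominations that $\tau_{Q(1-\epsilon)}$ from any $k \leq Q(1-\epsilon)$ is bounded by $\tau_{Q(1-\epsilon)}$ from $0$, and that $\tau_{Q(\epsilon)}$ from any $k \geq Q(\epsilon)$ is bounded by $\tau_{Q(\epsilon)}$ from $n$. Combining these with the case analysis above gives $\E_k[\tau^*] \leq \E_0[\tau_{Q(1-\epsilon)}] + \E_n[\tau_{Q(\epsilon)}] \leq 2T$ in every case, completing the proof. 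All steps are essentially routine given Lemma~\ref{lem-tv-hitting-bound}; the only point that requires a moment's care is the monotone coupling, which however is a standard feature of nearest-neighbor chains on the integers and introduces no real obstacle.
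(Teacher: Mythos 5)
Your proof is correct and takes essentially the same approach as the paper: both reduce the hitting time from an arbitrary state to the hitting times from the endpoints (using the nearest-neighbor structure of the chain), and then combine Markov's inequality with Lemma~\ref{lem-tv-hitting-bound}. The only cosmetic difference is that you first bound $\E_k[\tau^*]$ and then apply Markov to $\tau^*$, whereas the paper applies a union bound to the two tail events and Markov's inequality to each separately --- these are the same estimate.
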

\begin{proof}
Clearly, for any source and target states $x,y\in\Omega_n$, at least one of the endpoints $s\in\{0,n\}$ satisfies $\E_s \tau_y \geq \E_x \tau_y$ (by the definition of the birth-and-death chain). Therefore, if $T$ denotes the right-hand-side of \eqref{eq-mixing-order-hitting-bound}, then
$$\P_x(\max\{\tau_{Q(\epsilon)},\tau_{Q(1-\epsilon)}\} \geq T) \leq
\P_x(\tau_{Q(\epsilon)} \geq T) + \P_x(\tau_{Q(1-\epsilon)} \geq T) \leq \frac{1}{8}~,$$
where the last transition is by Markov's inequality.
The proof now follows directly from \eqref{eq-tv-from-k-hitting-bound}.
\end{proof}
\begin{remark*} The above corollary shows that the order of the mixing time is at most $\max\{\E_0\tau_{Q(1-\epsilon)},\E_n\tau_{Q(\epsilon)}\}$.
  It is in fact already possible (and not difficult) to show that the mixing time has this order \emph{precisely}.
  However, our proof only uses the order of the mixing-time as an upper-bound, in order to finally deduce a stronger result: this mixing-time is asymptotically equal to the above maximum of the expected hitting times.
\end{remark*}
Having established that the order of the mixing-time is at most the expected hitting time of $Q(1-\epsilon)$ and $Q(\epsilon)$ from the two endpoints of $\Omega_n$, assume here and in what follows, without loss of generality, that $\E_0\tau_{Q(1-\epsilon)}$ is at least $\E_n\tau_{Q(\epsilon)}$. Thus, \eqref{eq-mixing-order-hitting-bound} gives
\begin{equation}\label{eq-tmix-hitting-from-0-bound}
 \tmix(\mbox{$\frac{1}{4}$}) \leq 16 \cdot \E_0\tau_{Q(1-\epsilon)}~\mbox{ for any $0 < \epsilon < \frac{1}{16}$}~.
\end{equation}

A key element in our estimation is a result of
Karlin and McGregor \cite{KaMc}*{Equation (45)}, reproved by Keilson \cite{Keilson}, which
represents hitting-times for birth-and-death chains in continuous-time as a sum of independent exponential variables
 (see \cite{Fill},\cite{DiMi}, \cite{Fill2} for more on this result). The discrete-time version of this result was given by Fill \cite{Fill}*{Theorem 1.2}.
\begin{theorem}[\cite{Fill}]\label{thm-Keilson}
Consider a discrete-time birth-and-death chain with transition kernel $P$ on the state space $\{0,\ldots,d\}$
started at $0$. Suppose that $d$ is an absorbing state, and
suppose that the other birth probabilities $p_i$, $0 \leq i \leq d-1$, and death probabilities
$q_i$, $1 \leq i \leq d-1$, are positive. Then the absorption time in state $d$ has probability generating
function
\begin{equation}\label{eq-Keilson-pgf}
u\mapsto \prod_{j=0}^{d-1}
\Big[\frac{(1-\theta_{j})u}{1-\theta_{j}u}\Big]~,
\end{equation}
where $-1 \leq \theta_{j} < 1$ are the $d$ non-unit eigenvalues of $P$.
Furthermore, if $P$ has nonnegative eigenvalues then the
absorption time in state $d$ is distributed as the sum of $d$ independent geometric
random variables whose failure probabilities are the non-unit eigenvalues of $P$.
\end{theorem}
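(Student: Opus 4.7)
The plan is to analyze the probability generating function $\phi(u)\deq\E_0 u^{\tau_d}$ by combining a spectral decomposition on the transient states with a short rationality argument. Writing
$$P \;=\; \begin{pmatrix} Q & v \\ 0 & 1 \end{pmatrix},$$
where $Q$ is the $d\times d$ tridiagonal sub-stochastic matrix obtained by restricting the chain to $\{0,\ldots,d-1\}$, the eigenvalues of $P$ are $1$ together with those of $Q$, so the $\theta_j$'s are precisely the eigenvalues of $Q$. Conjugating $Q$ by the diagonal matrix with entries $\sqrt{\pi(i)}$, where $\pi(0)\deq 1$ and $\pi(i+1)\deq\pi(i)p_i/q_{i+1}$, produces a real symmetric tridiagonal matrix with nonzero off-diagonals; this classical symmetrization ensures that $Q$ is diagonalizable with $d$ real (in fact distinct) eigenvalues, and that $Q^t\to 0$ since $Q$ is strictly sub-stochastic.

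Using $\P_0(\tau_d>t) = e_0^{\top} Q^t \one$ and expanding $\one$ in an eigenbasis $\{f_j\}$ of $Q$, I obtain $\P_0(\tau_d>t)=\sum_{j=0}^{d-1} c_j \theta_j^t$ for suitable constants $c_j$. A routine summation by parts then produces
$$\phi(u) \;=\; 1-(1-u)\sum_{j=0}^{d-1}\frac{c_j}{1-u\theta_j},$$
a rational function whose only candidate poles lie among the points $u=1/\theta_j$.

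The crux is the following short identification. Set $p(u)\deq\phi(u)\prod_{j=0}^{d-1}(1-u\theta_j)$. By the previous display $p(u)$ is a polynomial in $u$ of degree at most $d$. On the other hand, any path from $0$ to $d$ has length at least $d$, so $\tau_d\geq d$ and hence $\phi(u)=O(u^d)$ near the origin; combined with $\prod_j(1-u\theta_j)=1+O(u)$, this forces $p(u)=O(u^d)$. A polynomial of degree $\leq d$ vanishing to order $\geq d$ at $0$ must equal $c\,u^d$ for some constant $c$. Evaluating at $u=1$, and noting that $\phi(1)=\P_0(\tau_d<\infty)=1$ since every state in $\{0,\dots,d-1\}$ is transient, pins down $c=\prod_j(1-\theta_j)$ and delivers \eqref{eq-Keilson-pgf}. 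The second assertion is then immediate: when every $\theta_j\geq 0$, each factor $(1-\theta_j)u/(1-u\theta_j)$ is the PGF of a geometric variable on $\{1,2,\ldots\}$ with failure probability $\theta_j$, so the product form identifies $\tau_d$ in distribution with a sum of $d$ independent such geometrics.

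The step I expect to be the main obstacle is the spectral preparation for $Q$: rigorously verifying diagonalizability, reality of the spectrum, and the resulting clean expansion $\P_0(\tau_d>t)=\sum_j c_j\theta_j^t$. Once that machinery is in place, the polynomial-vanishing trick compresses everything else into a few lines.
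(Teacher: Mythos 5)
The paper does not prove this theorem: it is cited directly as Theorem~1.2 of Fill's paper (where the proof goes via strong stationary duality, constructing a dual pure-birth chain whose absorption time is manifestly a sum of independent geometrics). Your argument is a correct and self-contained alternative, closer in spirit to the classical Karlin--McGregor/Keilson spectral approach but compressed by a nice polynomial-degree count. Every step checks out: the block-triangular decomposition identifies $\{\theta_j\}$ with $\operatorname{spec}(Q)$; the conjugation by $\operatorname{diag}(\sqrt{\pi(i)})$ with the detailed-balance measure shows $Q$ is similar to a Jacobi matrix and hence diagonalizable with real (and, with nonzero off-diagonals, distinct) eigenvalues; the Abel-summation identity $\phi(u)=1-(1-u)\sum_j c_j/(1-u\theta_j)$ is correct; $p(u)\deq\phi(u)\prod_j(1-u\theta_j)$ is visibly a polynomial of degree at most $d$; $\tau_d\geq d$ forces a zero of order $d$ at the origin, pinning $p(u)=cu^d$; and $\phi(1)=1$ (absorption is a.s.\ since $\{0,\dots,d-1\}$ is transient, a consequence of $\rho(Q)<1$) evaluates $c=\prod_j(1-\theta_j)$. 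The identification of each factor $(1-\theta_j)u/(1-\theta_j u)$ as the PGF of a geometric on $\{1,2,\dots\}$ with failure probability $\theta_j$ is exactly right, and product-of-PGFs gives independence. One small point you leave implicit but should state: the claimed bounds $-1\leq\theta_j<1$ follow from reality together with $\rho(Q)<1$ (strict sub-stochasticity plus irreducibility on the transient block), which you invoke only to assert $Q^t\to 0$. A closely related shortcut you might prefer to your ``$p(u)=O(u^d)$'' trick: Cramer's rule gives $\phi(u)=u\,p_{d-1}\,[(I-uQ)^{-1}]_{0,d-1}$, and the $(d-1,0)$ cofactor of the tridiagonal $I-uQ$ is triangular, yielding $\phi(u)=Cu^d/\det(I-uQ)$ directly; both routes are equally valid. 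In short: correct proof, genuinely different from the cited reference's duality argument, and no gap beyond the minor omission about the eigenvalue range.
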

The above theorem provides means of establishing the
concentration of the passage time from left to right of a chain, where the target (right end) state
is turned into an absorbing state. Since we are interested in the hitting time from one end to a given state
(namely, from $0$ to $Q(1-\epsilon)$), it is clearly equivalent to consider the chain where the target state
is absorbing. We thus turn to handle the hitting time of an absorbing end of a chain starting from the other end. The following lemma will infer its concentration from Theorem \ref{thm-Keilson}.
\begin{lemma} \label{lem-hitting-concentration}
Let $X(t)$ be a lazy irreducible birth-and-death chain on the state space $\{0,\ldots,d\}$, where
$d$ is an absorbing state, and let $\gap$ denote its spectral gap. Then
$\var_{0}\tau_{d} \leq \left(\E_0 \tau_d\right)/\gap$.
\end{lemma}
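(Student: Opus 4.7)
The plan is to apply Theorem~\ref{thm-Keilson} directly, after observing that laziness guarantees its key hypothesis. Specifically, because $P(x,x)\ge\tfrac12$ for every $x$, one can write $P=\tfrac12(I+Q)$ for a stochastic matrix $Q$, and since the eigenvalues of $Q$ lie in $[-1,1]$, every eigenvalue of $P$ lies in $[0,1]$. Hence the ``nonnegative eigenvalues'' clause of Theorem~\ref{thm-Keilson} applies, and under $\P_0$ the absorption time $\tau_d$ is distributed as a sum $\sum_{j=0}^{d-1} G_j$ of independent geometric random variables whose failure probabilities $\theta_j\in[0,1)$ are exactly the non-unit eigenvalues of $P$.

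Once this distributional identity is in hand, the claim reduces to a one-line computation. A geometric variable $G$ with failure probability $\theta$ satisfies $\E G = 1/(1-\theta)$ and $\var G = \theta/(1-\theta)^2 \le 1/(1-\theta)^2$. Because $\gap = 1-\max_j \theta_j$ by definition (using nonnegativity so that the spectrally largest non-unit eigenvalue coincides with $\max_j\theta_j$), one has $1-\theta_j\ge \gap$ for every $j$, and so
\[ \var G_j \;\le\; \frac{1}{(1-\theta_j)\,\gap} \;=\; \frac{\E G_j}{\gap}. \]
Summing over $j$ and invoking independence yields $\var_0\tau_d = \sum_j \var G_j \le \bigl(\sum_j \E G_j\bigr)/\gap = (\E_0\tau_d)/\gap$, which is exactly the desired inequality.

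There is no real obstacle; the only point that deserves a sentence of care is matching the lemma's setup to Theorem~\ref{thm-Keilson}. Making $d$ absorbing (i.e.\ setting the $d$-th row of $P$ to $\boldsymbol{e}_d^\top$) preserves laziness, since $P(d,d)=1\ge \tfrac12$, so the factorization $P=\tfrac12(I+Q)$ still holds and the resulting kernel retains nonnegative spectrum. Consequently the $\theta_j$ produced by Theorem~\ref{thm-Keilson} are precisely the non-unit eigenvalues whose largest value defines $1-\gap$, closing the loop between the probabilistic decomposition of $\tau_d$ and the spectral quantity $\gap$ named in the statement.
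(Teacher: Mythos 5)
Your proof is correct and follows essentially the same route as the paper's: invoke Theorem~\ref{thm-Keilson} (using laziness to guarantee nonnegative eigenvalues), write $\tau_d$ as a sum of independent geometrics with failure probabilities the non-unit eigenvalues $\theta_j$, and bound $\var G_j$ by $\E G_j/\gap$ using $1-\theta_j\ge\gap$. The only cosmetic difference is that the paper bounds $\theta_j/(1-\theta_j)\le 1/(1-\theta_0)$ while you bound $\theta_j\le 1$ and then $1/(1-\theta_j)\le 1/\gap$; both yield the identical inequality.
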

\begin{proof}
Let $\theta_0 \geq \ldots \geq \theta_{d-1}$ denote the $d$ non-unit eigenvalues of the transition kernel of $X$.
Recalling that $X$ is a lazy irreducible birth-and-death chain, $\theta_i \geq 0$ for all $i$, hence the
second part of Theorem \ref{thm-Keilson} implies that
$ \tau_d \sim \sum_{i=0}^{d-1} Y_i$,
where the $Y_i$-s are independent geometric random variables with means $1/(1-\theta_i)$.
Therefore,
\begin{align}\label{eq-exp-var-hitting}
\E_{0} \tau_d = \sum _{i=0}^{d-1}
\frac{1}{1-\theta_{i}}~,\quad \var_{0} \tau_d =  \sum_{i=0}^{d-1} \frac{\theta_i}{\left(1-\theta_{i}\right)^2}~,
\end{align}
which, using the fact that $\theta_0 \geq \theta_i$ for all $i$, gives
\begin{align*}
\var_{0}\tau_d & \leq
\frac{1}{1-\theta_{0}} \sum_{i=0}^{d-1}
\frac{1}{1-\theta_{i}} = \frac{\E_{0}\tau_d}{\gap}~,
\end{align*}
as required.
\end{proof}
As we stated before, the hitting time of a state in our original chain has the same distribution as
the hitting time in the modified chain (where this state is set to be an absorbing state). In order to
derive concentration from the above lemma, all that remains is to relate the spectral gaps of these two chains.
This is achieved by the next lemma.
\begin{lemma}\label{lem-gap-gap-inequality}
Let $X(t)$ be a lazy irreducible birth-and-death chain, and $\gap$ be its spectral gap.
Set $0 < \epsilon < 1$, and let $\ell= Q(1-\epsilon)$. Consider the
modified chain $Y(t)$, where $\ell$ is turned into an absorbing state, and let $\gap|_{[0,\ell]}$
denote its spectral gap. Then $\gap|_{[0,\ell]} \geq \epsilon \cdot \gap$.
\end{lemma}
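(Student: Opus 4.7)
The plan is to prove the inequality via the variational (Rayleigh-quotient) characterizations of both spectral gaps, combined with a Cauchy-Schwarz step that exploits the definition of $\ell = Q(1-\epsilon)$.

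First I would pin down a workable description of $\gap|_{[0,\ell]}$. With $\ell$ turned into an absorbing state, the restricted transition kernel on $\{0,\ldots,\ell\}$ is block-triangular: its eigenvalues consist of the trivial eigenvalue $1$ (for the absorbing state) together with the $\ell$ non-unit eigenvalues appearing in Theorem \ref{thm-Keilson}, which are precisely the eigenvalues of the substochastic tridiagonal block $\hat{P}$ obtained by deleting the row and column indexed by $\ell$. Since detailed balance makes $\hat{P}$ self-adjoint in the $\pi$-weighted inner product on $\{0,\ldots,\ell-1\}$, the Rayleigh-quotient principle together with a short algebraic identity (using reversibility and $\sum_j P(i,j)=1$) yields
$$\gap|_{[0,\ell]} \;=\; \min_{f\neq 0,\; f(\ell)=0}\;\frac{\mathcal{E}(f,f)}{\sum_{i=0}^{\ell}\pi(i)\,f(i)^2},$$
where $\mathcal{E}(f,f) = \tfrac12\sum_{i,j}\pi(i)P(i,j)(f(i)-f(j))^2$ is the Dirichlet form of the original chain. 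The boundary condition $f(\ell)=0$ is what converts the Rayleigh quotient for $\hat{P}$ into this Dirichlet-type quotient for $P$.

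Next I would compare this with the full-chain variational formula $\gap = \min_g \mathcal{E}(g,g)/\var_\pi(g)$. Take a minimizer $f$ of the problem above and extend it to $\Omega_n$ by setting $f\equiv 0$ on $\{\ell+1,\ldots,n\}$. Both $\mathcal{E}(f,f)$ and $\sum_i\pi(i)f(i)^2$ are unchanged by this extension: the only new edge touching the support is $(\ell,\ell+1)$, on which $f$ vanishes at both endpoints. Plugging the extended $f$ into the variational formula for $\gap$ gives $\mathcal{E}(f,f)\geq \gap\cdot\var_\pi(f)$, so it suffices to prove $\var_\pi(f)\geq \epsilon\,\E_\pi[f^2]$, equivalently $\E_\pi[f]^2\leq(1-\epsilon)\,\E_\pi[f^2]$.

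This last inequality is precisely where $\ell=Q(1-\epsilon)$ enters. After extension, $\mathrm{supp}(f)\subseteq\{0,\ldots,\ell-1\}$, and the definition of $Q$ gives $\sum_{j=0}^{\ell-1}\pi(j)<1-\epsilon$. Cauchy-Schwarz applied to $\E_\pi[f]=\E_\pi[f\cdot\one_{\mathrm{supp}(f)}]$ then yields
$$\E_\pi[f]^2 \;\leq\; \pi(\mathrm{supp}(f))\cdot\E_\pi[f^2] \;<\; (1-\epsilon)\,\E_\pi[f^2],$$
so $\var_\pi(f)\geq\epsilon\,\E_\pi[f^2]$ and hence $\gap|_{[0,\ell]}\geq\epsilon\cdot\gap$. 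The main subtlety is conceptual rather than computational: since the absorbing chain has stationary distribution $\delta_\ell$ rather than $\pi|_{[0,\ell]}$, some care is needed to identify $\gap|_{[0,\ell]}$ with the Dirichlet-form ratio on functions vanishing at $\ell$; once this identification is in hand, the remaining comparison with the full gap is essentially a two-line calculation.
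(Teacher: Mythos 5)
Your proof is correct and follows essentially the same route as the paper: both identify $\gap|_{[0,\ell]}$ via Rayleigh--Ritz with the Dirichlet-form ratio over functions vanishing on $\{\ell,\ldots,n\}$, observe that the numerator is unchanged by centering while the denominator changes from $\E_\pi f^2$ to $\var_\pi f$, and then use Cauchy--Schwarz on the support of $f$ (your $\E_\pi[f]^2\le\pi(\mathrm{supp}(f))\E_\pi[f^2]$ is the same inequality the paper writes as $\E_\pi[f^2\mid f\neq 0]\ge(\E_\pi[f\mid f\neq 0])^2$) together with $\pi(\mathrm{supp}(f))\le 1-\epsilon$ to conclude $\var_\pi f\ge\epsilon\,\E_\pi f^2$.
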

\begin{proof}
 By \cite{AF}*{Chapter 3, Section 6}, we have
\begin{equation}\label{eq-gap}
\gap = \mathop{\min_{f\;:\;\E_\pi f=0}}_{f \not\equiv 0}
\frac{\left<(I-P)f,f\right>_\pi}{\left<f,f\right>_\pi} =
\mathop{\min_{f\;:\;\E_\pi f=0}}_{f \not\equiv 0} \frac{1}{2}
\frac{\sum_{i,j}\left(f(i)-f(j)\right)^{2} P(i,j)\pi(i)}{\sum_{i} f(i)^{2}\pi(i)}~.
\end{equation}
Observe that $\gap|_{[0, \ell]}$ is precisely $1-
\lambda$, where $\lambda$ is the largest eigenvalue
of $P|_{\ell}$, the principal sub-matrix on the first $\ell$ rows and columns, indexed
by $\{0,\ldots,\ell-1\}$ (notice that this sub-matrix is strictly sub-stochastic, as $X$ is irreducible).
Being a birth-and-death chain, $X$ is reversible, that is,
$$ P_{i j} \pi(i) = P_{j i} \pi(j)\mbox{ for any }i,j~.$$
Therefore, it is simple to verify that $P|_{\ell}$ is a symmetric operator on
$\R^{\ell}$ with respect to the inner-product $\left<\cdot,\cdot\right>_\pi$; that is,
$\left<P|_{\ell} x,y\right>_\pi = \left<x,P|_{\ell} y\right>_\pi$ for every $x,y\in\R^{\ell}$, and hence
the Rayleigh-Ritz formula holds (cf., e.g., \cite{Halmos}), giving
$$ \lambda = \mathop{\max_{x \in\R^{\ell}}}_{x\neq 0} \frac{\left<P|_{\ell} x,x\right>_\pi}{\left<x,x\right>_\pi}~.$$
It follows that
\begin{align}
\gap|_{[0,\ell]} = 1-\lambda &= \mathop{\min_{f\;:\; f \nequiv 0}}_{f(k)=0 \;\forall k\geq \ell}
\frac{\sum_{i=0}^{n}\left(f(i)-
\sum_{j=0}^{n}P(i,j)f(j)\right)
f(i) \pi(i)}{\sum_{i=0}^{n}{f(i)^2 \pi(i)}}\nonumber\\
&= \mathop{\min_{f \;:\;  f\nequiv 0}}_{f(k)=0 \;\forall k\geq \ell}
\frac{1}{2}\frac{\sum_{0\leq i,j\leq n}\left(f(i)-
f(j)\right)^{2}P(i,j)\pi(i)}{\sum_{i=0}^{n}{f(i)^2 \pi(i)}}
~,\label{eq-gap-l}
\end{align}
where the last equality is by the fact that $P$ is stochastic.

Observe that \eqref{eq-gap} and \eqref{eq-gap-l} have similar forms,
and for any $f$ (which can also be treated as a random
variable) we can write $\tilde{f}= f - \E_{\pi} f$ such
that $\E_{\pi} \tilde{f}=0$. Clearly,
$$\left(f(i)-f(j)\right)^{2} P(i,j)\pi(i) = (\tilde{f}(i)-\tilde{f}(j))^{2} P(i,j)\pi(i),$$
hence in order to compare $\gap$ and $\gap|_{[0,\ell]}$, it will
suffice to compare the denominators of \eqref{eq-gap} and
\eqref{eq-gap-l}. Noticing that
$$\var_{\pi}(f)= \sum_{i} \tilde{f}(i)^{2}\pi(i)~,~\mbox{ and }
\E_{\pi} f^2 = \sum_{i} f(i)^{2}\pi(i)~,$$
we wish to bound the ratio between the above two terms.
Without loss of generality, assume that $\E_{\pi} f
=1$. Then every $f$ with $f(k)=0$ for all $k\geq
\ell$ satisfies
\begin{equation*}
\frac{\E_{\pi}f^2}{\pi(f\neq0)} = \E_{\pi} \left[f^2 \mid f\neq0\right]\geq
\left(\E_{\pi}\left[f \mid f\neq0\right]\right)^{2} = \left(\pi\left(f\neq0\right)\right)^{-2}~,
\end{equation*}
and hence
\begin{equation}
\frac{1}{\E_{\pi}f^2} \leq \pi(f\neq0) \leq 1-\epsilon~,
\end{equation}
where the last inequality is by the definition of $\ell$ as $Q(1-\epsilon)$.
Once again, using the fact that $\E_{\pi}f=1$, we deduce that
\begin{equation}
\frac{\var_{\pi}f}{\E_{\pi}f^2}= 1-
\frac{1}{\E_{\pi}f^2} \geq \epsilon~.
\end{equation}
Altogether, by the above discussion
on the comparison between \eqref{eq-gap} and \eqref{eq-gap-l}, we conclude
 that $\gap|_{[0,\ell]}\geq \epsilon \cdot\gap$.
\end{proof}
Combining Lemma \ref{lem-hitting-concentration} and Lemma \ref{lem-gap-gap-inequality}
yields the following corollary:
\begin{corollary}
\label{cor-quantile-hitting-concentration}
 Let $X(t)$ be a lazy irreducible birth-and-death chain on $\Omega_n$, let $\gap$ denote its spectral-gap,
 and $0 < \epsilon < 1$. The following holds:
\begin{equation}\label{eq-quantile-hitting-concentration}
\var_0 \tau_{Q(1-\epsilon)} \leq \frac{\E_{0}\tau_{Q(1-\epsilon)}}{\epsilon \cdot \gap}~.
\end{equation}
\end{corollary}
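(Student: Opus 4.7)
The plan is to combine the two preceding lemmas by considering the modified chain in which the target quantile state is absorbing. Set $\ell = Q(1-\epsilon)$, and let $Y(t)$ denote the chain obtained from $X(t)$ by turning $\ell$ into an absorbing state. The crucial preliminary observation is that the hitting time $\tau_\ell$ starting from $0$ has the same distribution under $X$ as under $Y$: the trajectories of the two chains can be coupled to coincide up to and including the first visit to $\ell$, since absorption only alters the dynamics after the chain arrives at $\ell$. Thus both $\E_0 \tau_\ell$ and $\var_0 \tau_\ell$ may equivalently be computed in the modified chain $Y$.

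Next, I would invoke Lemma \ref{lem-hitting-concentration} applied to $Y$, which satisfies the hypotheses of that lemma (a lazy irreducible birth-and-death chain on $\{0,\ldots,\ell\}$ with $\ell$ absorbing). This directly yields
\begin{equation*}
\var_0 \tau_\ell \leq \frac{\E_0 \tau_\ell}{\gap|_{[0,\ell]}},
\end{equation*}
where $\gap|_{[0,\ell]}$ is the spectral gap of $Y$.

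Finally, Lemma \ref{lem-gap-gap-inequality} supplies the comparison $\gap|_{[0,\ell]} \geq \epsilon \cdot \gap$ between the spectral gap of the truncated chain and that of the original chain, with the quantile parameter $\epsilon$ entering precisely through the definition $\ell = Q(1-\epsilon)$. Plugging this lower bound into the preceding display and rewriting $\tau_\ell$ as $\tau_{Q(1-\epsilon)}$ produces the stated inequality \eqref{eq-quantile-hitting-concentration}.

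There is no substantive obstacle here, since the two preceding lemmas have absorbed all of the work: Lemma \ref{lem-hitting-concentration} used Fill's spectral representation of the absorption time as a sum of independent geometrics, and Lemma \ref{lem-gap-gap-inequality} used a Dirichlet-form comparison that exploits the quantile definition of $\ell$. The only small point requiring care is the distributional identification of $\tau_\ell$ in $X$ and in $Y$, which is immediate from the fact that a hitting time depends only on the chain's behavior strictly before absorption.
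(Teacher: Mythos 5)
Your proof is correct and follows exactly the route the paper intends: the paper states the corollary as an immediate consequence of combining Lemma~\ref{lem-hitting-concentration} (applied to the chain with $Q(1-\epsilon)$ made absorbing) with the spectral-gap comparison of Lemma~\ref{lem-gap-gap-inequality}, and your write-up spells out the same two steps plus the (correct) observation that the hitting-time distribution is unchanged by making the target state absorbing.
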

\begin{remark*}
  The above corollary implies the following statement: whenever $\gap \cdot \E_{0}\tau_{Q(1-\epsilon)}\to\infty$
   with $n$, the hitting-time $\tau_{Q(1-\epsilon)}$ is concentrated, as $\var_0 \tau_{Q(1-\epsilon)} = o\big( (\E_0\tau_{Q(1-\epsilon)})^2\big)$. This is essentially the case
   under the assumptions of Theorem \ref{thm-1-effective} (which include a lower bound on $\gap \cdot \tmix(\frac{1}{4})$ in terms of $\epsilon$),
    as we already established in \eqref{eq-tmix-hitting-from-0-bound} that $\E_0 \tau_{Q(1-\epsilon)} \geq \frac{1}{16}\tmix(\frac{1}{4})$.
\end{remark*}

Recalling the definition of cutoff and the relation between the mixing
time and hitting times of the quantile states, we expect that the behaviors of
$\tau_{Q(\epsilon)}$ and $\tau_{Q(1-\epsilon)}$ would be roughly the same;
this is formulated in the following lemma.

\begin{lemma}\label{lem-hitting-ratio}
Let $X(t)$ be a lazy irreducible birth-and-death chain on $\Omega_n$,
and suppose that for some $0 < \epsilon < \frac{1}{16}$ we have
$\trel < \epsilon^4 \cdot \E_0 \tau_{Q(1-\epsilon)}$.
Then for any fixed $\epsilon \leq \alpha < \beta \leq 1-\epsilon$:
\begin{equation}\label{eq-commute-time}
  \E_{Q(\alpha)} \tau_{Q(\beta)} \leq \frac{3}{2\epsilon} \sqrt{ \trel \cdot \E_0 \tau_{Q(\frac{1}{2})}}~.
\end{equation}
\end{lemma}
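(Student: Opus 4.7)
The plan is to combine the strong Markov property, the concentration estimate from Corollary \ref{cor-quantile-hitting-concentration}, and a comparison with the stationary distribution through the no-crossing coupling underlying Lemma~\ref{lem-tv-hitting-bound}.

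First I would reduce to the extreme quantiles $\alpha = \epsilon$, $\beta = 1-\epsilon$. By the strong Markov property and birth-and-death monotonicity, $\E_{Q(\alpha)}\tau_{Q(\beta)} = \mu_\beta - \mu_\alpha$, where $\mu_\gamma := \E_0 \tau_{Q(\gamma)}$. Since $\alpha \geq \epsilon$ and $\beta \leq 1-\epsilon$ force $Q(\epsilon) \leq Q(\alpha) < Q(\beta) \leq Q(1-\epsilon)$, one has $\tau_{Q(\beta)} - \tau_{Q(\alpha)} \leq \tau_{Q(1-\epsilon)} - \tau_{Q(\epsilon)}$ pointwise under $\P_0$, so it suffices to control $\Delta := \mu_{1-\epsilon} - \mu_\epsilon$. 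The proof of Corollary~\ref{cor-quantile-hitting-concentration} extends with no change to give $\var_0 \tau_{Q(\gamma)} \leq \trel \mu_\gamma / (1-\gamma)$ for every $\gamma \leq 1-\epsilon$, since Lemma~\ref{lem-gap-gap-inequality} only requires $\pi([0, Q(\gamma)-1]) \leq 1-\epsilon$; in particular $\var_0 \tau_{Q(\epsilon)} \leq \trel \mu_{1-\epsilon}$ and $\var_0 \tau_{Q(1-\epsilon)} \leq \trel \mu_{1-\epsilon}/\epsilon$.

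Next I would apply Chebyshev's inequality at the midpoint $t_0 := (\mu_\epsilon + \mu_{1-\epsilon})/2$. Both $\P_0(\tau_{Q(\epsilon)} > t_0)$ and $\P_0(\tau_{Q(1-\epsilon)} \leq t_0)$ are bounded by $O(\trel \mu_{1-\epsilon} / (\epsilon \Delta^2))$, so $\P_0(\tau_{Q(\epsilon)} \leq t_0 < \tau_{Q(1-\epsilon)}) \geq 1 - O(\trel \mu_{1-\epsilon}/(\epsilon \Delta^2))$, and on this event $X(t_0) < Q(1-\epsilon)$. On the other hand, Lemma~\ref{lem-tv-hitting-bound} (and the coupling behind it) gives $\P_0(X(t_0) < Q(1-\epsilon)) \leq \pi([0, Q(1-\epsilon)-1]) + d(t_0) < 1 - \epsilon + d(t_0)$. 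For $\Delta$ above the target bound, comparing the two estimates forces $d(t_0) > \epsilon/2$, hence $t_0 < \tmix(\epsilon/2)$; together with $t_0 \geq \mu_{1-\epsilon}/2 \geq \tmix/32$ from Corollary~\ref{cor-mixing-time-order}, the sub-multiplicativity of $\tmix$, and the standing assumption $\trel < \epsilon^4 \mu_{1-\epsilon}$, this produces a contradiction and pins down the preliminary estimate $\Delta \leq C(\epsilon)\, \epsilon^{-1}\sqrt{\trel \mu_{1-\epsilon}}$.

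The main obstacle is to upgrade this to the sharper bound involving $\mu = \mu_{1/2}$ in place of $\mu_{1-\epsilon}$. I would close this gap by bootstrapping: the preliminary estimate combined with $\trel < \epsilon^4 \mu_{1-\epsilon}$ gives $\mu_{1-\epsilon} - \mu \leq \mu_{1-\epsilon} - \mu_\epsilon = \Delta = O(\epsilon \mu_{1-\epsilon})$, so $\mu$ and $\mu_{1-\epsilon}$ agree up to a $(1 + O(\epsilon))$ factor. Substituting $\mu_{1-\epsilon} \leq (1 + O(\epsilon))\mu$ back into the Chebyshev calculation sharpens the bound to $\Delta \leq \frac{3}{2\epsilon} \sqrt{\trel \mu}$, with the stated constants obtainable by tracking the Chebyshev thresholds. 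The most delicate technical point throughout is verifying $t_0 \geq \tmix(\epsilon/2)$, which is exactly where the hypothesis on $\trel/\mu_{1-\epsilon}$ enters to rule out the only remaining regime.
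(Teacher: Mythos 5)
Your reduction to the extreme quantiles, the extension of Corollary \ref{cor-quantile-hitting-concentration} to other quantile states, and the Chebyshev estimates at the midpoint $t_0$ are all correct. The proof breaks at the claimed contradiction. Writing $\mu_\gamma \deq \E_0\tau_{Q(\gamma)}$ as you do, your two estimates give $\|P^{t_0}(0,\cdot)-\pi\|_{\mathrm{TV}} > \epsilon/2$ and hence $t_0 < \tmix(\epsilon/2)$, and you also have $t_0 \geq \mu_{1-\epsilon}/2 \geq \tmix(\tfrac14)/32$. But these two inequalities are entirely compatible: sub-multiplicativity only gives an \emph{upper} bound $\tmix(\epsilon/2) \leq \lceil\tfrac12\log_2(2/\epsilon)\rceil\,\tmix(\tfrac14)$, so having $\tmix(\tfrac14)/32 \leq t_0 < \tmix(\epsilon/2)$ presents no tension. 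The point is that $t_0$ sits near the cutoff location (it is within $\Delta$ of $\mu_{1/2}$), so observing that the chain from $0$ is still $\epsilon/2$-far from $\pi$ at that time is exactly what one expects, and neither the hypothesis $\trel < \epsilon^4\mu_{1-\epsilon}$ nor Corollary \ref{cor-mixing-time-order} rules it out. Since the preliminary estimate never gets established, the bootstrap in your final paragraph has nothing to start from.

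The missing idea is a spectral estimate at a time \emph{much shorter} than $\tmix$. The paper starts the chain not from $0$ but from $\nu$, the restriction of $\pi$ to $[0,Q(\epsilon)]$, and writes $\nu = w\pi$ with $w = \one_{[0,Q(\epsilon)]}/\pi([0,Q(\epsilon)])$. The $L^2(\pi)$ bound then gives $\|P^t(\nu,\cdot)-\pi\|_{\mathrm{TV}} \leq \tfrac12\lambda_2^t\|w-\one\|_{L^2(\pi)} \leq \lambda_2^t/(2\sqrt\epsilon)$, which is at most $\epsilon/2$ already at $t_\epsilon = \lceil\tfrac32\log(1/\epsilon)\trel\rceil$. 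Consequently $\P_\nu(\tau_{Q(1-\epsilon)}\leq t_\epsilon) \geq \epsilon/2$; and since $\nu$ is supported on $[0,Q(\epsilon)]$, also $\P_{Q(\epsilon)}(\tau_{Q(1-\epsilon)}\leq t_\epsilon)\geq\epsilon/2$. Now Chebyshev is applied at the short time $t_\epsilon$ rather than at $t_0$: if $\E_{Q(\epsilon)}\tau_{Q(1-\epsilon)}$ were much larger than $t_\epsilon + \sqrt{(2/\epsilon)\var_0\tau_{Q(1-\epsilon)}}$, the lower-tail probability would be below $\epsilon/2$, a contradiction. This forces $\E_{Q(\epsilon)}\tau_{Q(1-\epsilon)}\leq t_\epsilon + \sqrt{(2/\epsilon)\var_0\tau_{Q(1-\epsilon)}}$, after which the variance bound from Corollary \ref{cor-quantile-hitting-concentration} and the bootstrap you sketched (replacing $\mu_{1-\epsilon}$ by $\mu_{1/2}$) do finish the argument. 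In short: your Chebyshev-vs-TV comparison is sound, but at a time of order $\tmix$ it is toothless; you need the $L^2$ spectral bound from the truncated stationary measure to show the escape happens within $O(\trel\log(1/\epsilon))$ steps, which is the only place the hypothesis $\trel\ll\mu_{1-\epsilon}$ actually bites.
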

\begin{proof}
Since by definition, $\E_{Q(\epsilon)}\tau_{Q(1-\epsilon)} \geq \E_{Q(\alpha)} \tau_{Q(\beta)}$
(the left-hand-side can be written as a sum of three independent hitting times, one of which being
the right-hand-side), it suffices to show \eqref{eq-commute-time} holds for $\alpha=\epsilon$ and $\beta=1-\epsilon$.

Consider the random variable $\nu$, distributed according to the restriction of the stationary distribution $\pi$
to $[Q(\epsilon)]\deq\{0,\ldots,Q(\epsilon)\}$, that is:
\begin{equation}
\nu(k) \deq \frac{\pi(k)}{\pi([Q(\epsilon)])}\oneb{[Q(\epsilon)]}~,
\end{equation}
and let $w \in \R^{\Omega_n}$ denote the vector $ w \deq \oneb{[Q(\epsilon)]}/\pi([Q(\epsilon)])$.
As $X$ is reversible, the following holds for any $k$:
$$P^t(\nu,k) = \sum_i P^t(i,k) \pi(i) w(i) = (P^t w)(k) \cdot \pi(k)~.$$
Thus, by the definition of the total-variation distance (for a finite space):
\begin{align*}
  \|P^{t}(\nu, \cdot)- \pi(\cdot)\|_\mathrm{TV} &=
\frac{1}{2}\sum_{k=0}^n \pi(k)\left| \left(P^t w\right)(k) - 1 \right| = \frac{1}{2}\| P^t (w - \one) \|_{L^1(\pi)}\\
&\leq \frac{1}{2}\| P^t (w - \one) \|_{L^2(\pi)}~,
\end{align*}
where the last inequality follows from the Cauchy-Schwartz
inequality. As $w-\one$ is orthogonal to $\one$ in the inner-product space $\left<\cdot,\cdot\right>_{L^2(\pi)}$, we deduce that
$$\| P^t (w-\one)\|_{L^2(\pi)} \leq \lambda_2^t  \|w-\one\|_{L^2(\pi)}~,$$
where $\lambda_2$ is the second largest eigenvalue of $P$. Therefore,
\begin{align*}\|P^{t}(\nu, \cdot)- \pi(\cdot)\|_\mathrm{TV} &\leq \frac{1}{2}\lambda_2^t \|w-\one\|_{L^2(\pi)} =\frac{1}{2}\lambda_2^t \sqrt{\left(1/\pi([Q(\epsilon)])\right)-1} \leq \frac{\lambda_2^t }{2\sqrt{\epsilon}} ~,\end{align*}
where the last inequality is by the fact that $\pi([Q(\epsilon)]) \geq \epsilon$ (by definition).
Recalling that $\trel = \gap^{-1} = 1/(1-\lambda_2)$, define
$$t_\epsilon = \left\lceil \mbox{$\frac32$}\log(1/\epsilon)\trel\right\rceil~,$$
and notice that, as $\epsilon < \frac1{16}$ and $\trel \geq 1$, we have $\frac12 \log(1/\epsilon) \trel \geq 1$,
and so
$$t_\epsilon \leq 2\log(1/\epsilon)\trel~.$$
Since $\log(1/x)\geq 1-x$ for all $x\in(0,1]$, it follows that
$\lambda_2^{t_\epsilon} \leq \epsilon^{3/2}$, thus
\begin{align}\label{eq-tv-epsilon/2}
\left\|P^{t_\epsilon}(\nu, \cdot)- \pi(\cdot)\right\|_\mathrm{TV} \leq \epsilon/2~.
\end{align}
We will next use a second moment argument to obtain an upper bound on the expected commute time. By \eqref{eq-tv-epsilon/2} and the definition of the total-variation distance,
\begin{align*}
\P_{\nu}(\tau_{Q(1-\epsilon)} \leq t_\epsilon)  \geq \epsilon - \|P^{t_\epsilon}(\nu, \cdot)- \pi(\cdot)\|_\mathrm{TV}
 \geq \epsilon / 2~,
\end{align*}
whereas the definition of $\nu$ as being supported by the range $[Q(\epsilon)]$ gives
\begin{align*}
\P_{\nu}(\tau_{Q(1-\epsilon)} \leq t_\epsilon)
\leq \P_{Q(\epsilon)}(\tau_{Q(1-\epsilon)} \leq t_\epsilon)\leq \frac{\var_{Q(\epsilon)}\tau_{Q(1-\epsilon)}}{\left|
\E_{Q(\epsilon)}\tau_{Q(1-\epsilon)}-t_\epsilon\right|^{2}}~.
\end{align*}
Combining the two,
\begin{equation}\label{eq-tau-var-mix}
\E_{Q(\epsilon)}\tau_{Q(1-\epsilon)} \leq
t_\epsilon + \sqrt{\frac{2}{\epsilon}\var_{Q(\epsilon)}\tau_{Q(1-\epsilon)}}.
\end{equation}
Recall that starting from $0$, the hitting time to point
$Q(1-\epsilon)$ is exactly the sum of the hitting time from $0$
to $Q(\epsilon)$ and the hitting time from $Q(\epsilon)$ to
$Q(1-\epsilon)$, where both these hitting times are independent.
Therefore,
\begin{equation}\label{eq-tau-var-var}
\var_{Q(\epsilon)} \tau_{Q(1-\epsilon)} \leq \var_{0}
\tau_{Q(1-\epsilon)}~.
\end{equation}
By \eqref{eq-tau-var-mix} and \eqref{eq-tau-var-var} we get
\begin{align}\label{eq-commute-bound-1}
\E_{Q(\epsilon)}\tau_{Q(1-\epsilon)} &\leq
t_\epsilon + \sqrt{\frac{2}{\epsilon}\var_{0}\tau_{Q(1-\epsilon)}} \nonumber\\
&\leq 2\log(1/\epsilon)\trel + (1/\epsilon)\sqrt{2 \trel \cdot \E_0 \tau_{Q(1-\epsilon)}}~,
\end{align}
where the last inequality is by Corollary \ref{cor-quantile-hitting-concentration}.

We now wish to rewrite the bound \eqref{eq-commute-bound-1} in terms of $\trel \cdot \E_0\tau_{Q(\frac{1}{2})}$
using our assumptions on $\trel$ and $\E_0\tau_{Q(\frac{1}{2})}$.
First, twice plugging in the fact that
$\trel < \epsilon^4 \cdot \E_0\tau_{Q(1-\epsilon)}$ yields
\begin{align}
\E_{Q(\epsilon)}\tau_{Q(1-\epsilon)} 
&\leq
 \left(2 \epsilon^3 \log(1/\epsilon) + \sqrt{2}\right)\epsilon \cdot \E_0 \tau_{Q(1-\epsilon)} \nonumber \\
 &\leq \mbox{$\frac{3}{2}$}\epsilon\cdot \E_0 \tau_{Q(1-\epsilon)} \label{eq-commute-bound-2}~,
\end{align}
where in the last inequality we used the fact that $\epsilon < \frac{1}{16}$.
In particular,
\begin{align*}
\E_0 \tau_{Q(1-\epsilon)} &\leq \E_0 \tau_{Q(\frac{1}{2})} + \E_{Q(\epsilon)} \tau_{Q(1-\epsilon)}
\leq \E_0 \tau_{Q(\frac{1}{2})} + \mbox{$\frac{3}{2}$}\epsilon\cdot \E_0 \tau_{Q(1-\epsilon)}~,
\end{align*}
and after rearranging,
\begin{equation}\label{eq-tau-q(1-epsilon)-bound}
\E_0 \tau_{Q(1-\epsilon)} \leq \left(\E_0 \tau_{Q(\frac{1}{2})} \right) / \left( 1-\mbox{$\frac{3}{2}$}\epsilon\right)~.
\end{equation}
Plugging this result back in \eqref{eq-commute-bound-1}, we deduce that
\begin{equation*}
\E_{Q(\epsilon)}\tau_{Q(1-\epsilon)} \leq
2\log(1/\epsilon)\cdot \trel + \frac{1}{\epsilon}\sqrt{\frac{2\trel \cdot \E_0 \tau_{Q(\frac{1}{2})}}{1-\mbox{$\frac{3}{2}$}\epsilon}}~.
\end{equation*}
A final application of the fact $\trel < \epsilon^4 \cdot \E_0 \tau_{Q(1-\epsilon)}$, together with \eqref{eq-tau-q(1-epsilon)-bound} and the fact that $\epsilon < \frac{1}{16}$,
gives
\begin{align}
\E_{Q(\epsilon)}\tau_{Q(1-\epsilon)} &\leq  \bigg( \frac{2\epsilon^{2} \log(1/\epsilon) + \sqrt{2}/\epsilon }{\sqrt{1-\frac{3}{2}\epsilon}} \bigg) \sqrt{\trel \cdot \E_0 \tau_{Q(\frac{1}{2})}}\nonumber\\
&\leq \frac{3}{2\epsilon}\sqrt{ \trel \cdot \E_0 \tau_{Q(\frac{1}{2})}}~,
 \label{eq-commute-bound-3}
\end{align}
as required.
\end{proof}
We are now ready to prove the main theorem.
\begin{proof}[\textbf{\emph{Proof of Theorem \ref{thm-1-effective}}}]
Recall our assumption (without loss of generality)
\begin{equation}
  \label{eq-E0-tau(Q(1-epsilon))-wlog-assumption}
  \E_0 \tau_{Q(1-\epsilon)} \geq \E_n \tau_{Q(\epsilon)}~,
\end{equation}
and define what would be two ends of the cutoff window:
\begin{align*}
\left\{\begin{array}{ll}
  t^- = t^-(\gamma) \deq \Big\lfloor \; \E_0 \tau_{Q(\frac{1}{2})} - \gamma \sqrt{\trel \cdot \E_0 \tau_{Q(\frac{1}{2})}} \; \Big\rfloor~,\\
 t^+ = t^+(\gamma) \deq  \Big\lceil \; \E_0 \tau_{Q(\frac{1}{2})} + \gamma \sqrt{\trel \cdot \E_0 \tau_{Q(\frac{1}{2})}} \; \Big\rceil~.
 \end{array}\right.
\end{align*}
For the lower bound, let $0 < \epsilon < \frac{1}{16}$; combining \eqref{eq-tmix-hitting-from-0-bound} with the assumption that $\trel \leq \epsilon^5 \cdot \tmix(\frac{1}{4})$ gives
\begin{equation}\label{eq-trel-hitting-time-prob}\trel \leq 16 \epsilon^5  \cdot \E_0\tau_{Q(1-\epsilon)} < \epsilon^4 \cdot \E_0\tau_{Q(1-\epsilon)}~.\end{equation}
Thus, we may apply Lemma \ref{lem-hitting-ratio} to get
\begin{align*}\E_0 \tau_{Q(\epsilon)} &\geq \E_0 \tau_{Q(\frac{1}{2})}
- \E_{Q(\epsilon)}\tau_{Q(1-\epsilon)} \geq \E_0 \tau_{Q(\frac{1}{2})}
- \frac{3}{2\epsilon}\sqrt{ \trel \cdot \E_0\tau_{Q(\frac{1}{2})}} ~.\end{align*}
Furthermore, recalling Corollary \ref{cor-quantile-hitting-concentration}, we also have
\begin{align*}
\var_0 \tau_{Q(\epsilon)} &\leq \frac{1}{1-\epsilon}\trel \cdot \E_0\tau_{Q(\epsilon)} \leq 2\trel \cdot \E_0\tau_{Q(\frac{1}{2})}~.
\end{align*}
Therefore, by Chebyshev's inequality, the following holds for any $\gamma > \frac{3}{2\epsilon}$:
$$ \| P^{t^-}(0,\cdot)-\pi \|_\mathrm{TV} \geq 1 -\epsilon - \P_0(\tau_{Q(\epsilon)} \leq t^-)
\geq 1 - \epsilon - 2\left(\gamma-\frac{3}{2\epsilon}\right)^{-2} ~,$$
and a choice of $\gamma =2/\epsilon$ implies that (with room to spare, as $\epsilon < \frac{1}{16}$)
\begin{equation}\label{eq-tmix-final-lower-bound} \tmix(1-2\epsilon) \geq \E_0 \tau_{Q(\frac{1}{2})} - (2/\epsilon) \sqrt{\trel \cdot \E_0 \tau_{Q(\frac{1}{2})}}~.\end{equation}

The upper bound will follow from a similar argument. Take $0 < \epsilon < \frac{1}{16}$ and recall
that $\trel < \epsilon^4\cdot \E_0\tau_{Q(1-\epsilon)}$. Applying Corollary \ref{cor-quantile-hitting-concentration} and Lemma \ref{lem-hitting-ratio} once more (with \eqref{eq-tau-q(1-epsilon)-bound} as well as \eqref{eq-E0-tau(Q(1-epsilon))-wlog-assumption} in mind) yields:
\begin{align}\E_n \tau_{Q(\epsilon)} &\leq \E_0 \tau_{Q(1-\epsilon)} \leq \E_0 \tau_{Q(\frac{1}{2})}
+ \frac{3}{2\epsilon}\sqrt{ \trel \cdot \E_0\tau_{Q(\frac{1}{2})}} ~,\label{eq-tau-Qs}\\
\var_0 \tau_{Q(1-\epsilon)} &\leq (1/\epsilon)\trel \cdot \E_0 \tau_{Q(1-\epsilon)}  \leq \frac{\trel \cdot \E_0\tau_{Q(\frac{1}{2})}}{\epsilon(1-\frac{3}{2}\epsilon)} \leq (2/\epsilon)\trel \cdot \E_0\tau_{Q(\frac{1}{2})}~,\nonumber\\
\var_n \tau_{Q(\epsilon)} &\leq (1/\epsilon)\trel \cdot \E_n \tau_{Q(\epsilon)} \leq (2/\epsilon) \trel \cdot \E_0\tau_{Q(\frac{1}{2})}~.\nonumber
\end{align}
Hence, combining Chebyshev's inequality with \eqref{eq-tv-from-k-hitting-bound} implies that for all $k$ and $\gamma > \frac{3}{2\epsilon}$,
\begin{align*} \| P^{t^+}(k,\cdot) - \pi \|_{\mathrm{TV}} &\leq 2\epsilon +
\P_0(\tau_{Q(1-\epsilon)} > t^+) + \P_n(\tau_{Q(\epsilon)} > t^+) \\
&\leq 2\epsilon  + \frac{4}{\epsilon} \left(\gamma-\frac{3}{2\epsilon}\right)^{-2} ~. \end{align*}
Choosing $\gamma = \frac{35}{12\epsilon}$ we therefore get (with room to spare)
\begin{equation*}\tmix(4\epsilon) \leq \Big\lceil\;\E_0 \tau_{Q(\frac{1}{2})} + \frac{35}{12\epsilon} \sqrt{\trel \cdot \E_0 \tau_{Q(\frac{1}{2})}}\;\Big\rceil~.\end{equation*}
Note that, $Q\big(\frac12\big) > 0$, since otherwise $\E_0\tau_{Q(\frac12)} = 0$ and thus \eqref{eq-tau-Qs} would imply that
$\E_n \tau_{Q(\epsilon)} = \E_0\tau_{Q(1-\epsilon)}=0$. Indeed, in that case, we would get $Q(1-\epsilon)=0$ and yet $Q(\epsilon)=n$,
and therefore $n = 0$, turning the statement of the theorem to be trivially true. It follows that $\trel \cdot \E_0\tau_{Q(\frac12)} \geq 1$, and combining this with the fact that $\epsilon < \frac1{16}$ we conclude that
\begin{equation}\label{eq-tmix-upper-lower-bound} \tmix(4\epsilon) \leq \E_0 \tau_{Q(\frac{1}{2})} + (3/\epsilon) \sqrt{\trel \cdot \E_0 \tau_{Q(\frac{1}{2})}}~.\end{equation}
We have thus established the cutoff window in terms of $\trel$ and $\E_0\tau_{Q(\frac{1}{2})}$, and it remains to write it in terms of $\trel$ and $\tmix$. To this end, recall that \eqref{eq-tau-q(1-epsilon)-bound} implies that
$$ \trel < \epsilon^4 \cdot \E_0\tau_{Q(1-\epsilon)} \leq \frac{\epsilon^4}{1-\frac{3}{2}\epsilon}\E_0\tau_{Q(\frac{1}{2})}~,$$
hence \eqref{eq-tmix-final-lower-bound} gives the following for any $\epsilon < \frac{1}{16}$:
\begin{align}\label{eq-tmix-fine-upper-bound}
\tmix\mbox{$\left(\frac{1}{4}\right)$} &\geq \bigg(1 - \frac{2\epsilon}{\sqrt{1-\frac{3}{2}\epsilon}} \bigg) \cdot \E_0 \tau_{Q(\frac{1}{2})} \geq \frac{5}{6} \E_0 \tau_{Q(\frac{1}{2})}~.
\end{align}
Altogether, \eqref{eq-tmix-final-lower-bound}, \eqref{eq-tmix-upper-lower-bound} and \eqref{eq-tmix-fine-upper-bound} give
\begin{align*} \tmix(4\epsilon) - \tmix(1-2\epsilon) &\leq (5/\epsilon)\sqrt{\trel \cdot \E_0\tau_{Q(\frac{1}{2})} } \leq
(6/\epsilon)\sqrt{ \trel \cdot \tmix\big(\mbox{$\frac{1}{4}$}\big)} ~,\end{align*}
completing the proof of the theorem.
\end{proof}

\subsection{Tightness of the bound on the cutoff window}
The bound $\sqrt{\tmix \cdot \trel}$ on the size of the cutoff window, given in Corollary \ref{cor-lazy}, is essentially tight in the following sense. Suppose that $t_M(n)$ and $t_R(n) \geq 2$ are the mixing-time $\tmix(\frac{1}{4})$ and relaxation-time $\trel$ of a family $(X_t^{(n)})$ of lazy irreducible birth-and-death chains that exhibits cutoff. For any fixed $\epsilon > 0$, we construct a family $(Y_t^{(n)})$ of such chains satisfying
\begin{equation}\label{eq-tight-window-construction}
  \left\{\begin{array}
    {ll} (1-\epsilon)t_M \leq \tmix^{(n)}(\frac{1}{4}) \leq (1+\epsilon)t_M~,\\
    |\trel^{(n)} - t_R| \leq \epsilon~,
  \end{array}\right.
\end{equation}
and in addition, having a cutoff window of size $(\tmix^{(n)} \cdot \trel^{(n)})^{1/2}$.

Our construction is as follows: we first choose $n$ reals in $[0,1)$, which would serve as the nontrivial eigenvalues of our chain: any such sequence can be realized as the nontrivial eigenvalues of a birth-and-death chain with death probabilities all zero, and an absorbing state at $n$. Our choice of eigenvalues will be such that $\tmix^{(n)} = (\frac{1}{2}+o(1))t_M$, $\trel^{(n)} = \frac{1}{2}t_R$ and the chain will exhibit cutoff with a window of $\sqrt{t_M \cdot t_R}$. Finally, we perturb the chain to make it irreducible, and consider its lazy version to obtain \eqref{eq-tight-window-construction}.

First, notice that $t_R = o(t_M)$ (a necessary condition for the cutoff, as given by Corollary \ref{cor-lazy}). Second, if a family of chains has mixing-time and relaxation-time $t_M$ and $t_R$ respectively, then the cutoff point is without loss of generality the expected hitting time from $0$ to some state $m$ (namely, for $m=Q(\frac{1}{2})$); let $h_m$ denote this expected hitting time. Theorem \ref{thm-Keilson} gives $$h_m = \E_0 \tau_m \leq \E_0 \tau_n \leq n\cdot t_R~.$$

Setting $\epsilon > 0$, we may assume that $t_R \geq 2(1+\epsilon)$ (since $t_R \geq 2$, and a small additive error is permitted in \eqref{eq-tight-window-construction}).
Set $K = \frac{1}{2}h_m / t_R$, and define the following sequence of eigenvalues $\{\lambda_i\}$: the first $\lfloor K \rfloor$ eigenvalues will be equal to $\lambda \deq 1 - 2/t_R$, and the remaining eigenvalues will all have the value $\lambda'$, such that the sum $\sum_{i=1}^n 1/(1-\lambda_i)$ equals $\frac{1}{2}h_m$ (our choice of $K$ and the fact that $h_m \leq n t_R$ ensures that $\lambda' \leq \lambda$). By Theorem \ref{thm-Keilson}, the birth-and-death chain with absorbing state in $n$ which realizes these eigenvalues satisfies:
\begin{align*}
\left\{\begin{array}
  {l}\tmix^{(n)} = (1+o(1))\E_0 \tau_n = (\frac{1}{2}+o(1)) t_M~,\\ \trel^{(n)} = \frac{1}{2}t_R~,\\
\var_0 \tau_n \geq \lfloor K \rfloor \frac{\lambda}{(1-\lambda)^2} \geq \frac{\epsilon+o(1)}{8(1+\epsilon)} t_M \cdot t_R~,
\end{array}\right.\end{align*}
where in the last inequality we merely considered the contribution of the first $\lfloor K \rfloor$ geometric random variables to the variance. Continuing to focus on the sum of these $\lfloor K\rfloor$ i.i.d. random variables, and recalling that $K\to\infty$ with $n$ (by the assumption $t_R=o(t_M)$), the Central-Limit-Theorem implies that
$$ \lim_{n\to\infty}\P_0 (\tau_n - \E_0 \tau_n > \gamma \sqrt{t_M \cdot t_R}) \geq c(\gamma,\epsilon) > 0\quad\mbox{ for any }\gamma > 0~.$$
Hence, the cutoff window of this chain has order at least $\sqrt{t_M \cdot t_R}$.

Clearly, perturbing the transition kernel to have all death-probabilities equal some $\epsilon'$ (giving an irreducible chain), shifts every eigenvalue by at most $\epsilon'$ (note that $\tau_n$ from $0$ has the same distribution if $n$ is an absorbing state). Finally, the lazy version of this chain has twice the values of $\E_0\tau_n$ and $\trel$, giving the required result \eqref{eq-tight-window-construction}.

\section{Continuous-time chains and $\delta$-lazy discrete-time chains}\label{sec:cont-delta-lazy}
In this section, we discuss the versions of Corollary \ref{cor-lazy} (and Theorem \ref{thm-1-effective}) for the cases of either continuous-time chains (Theorem \ref{thm-cont}),
or $\delta$-lazy discrete-time chains (Theorem \ref{thm-delta-lazy}). Since
the proofs of these versions follow the original arguments almost entirely, we describe only the modifications
required in the new settings.

\subsection{Continuous-time birth-and-death chains}
In order to prove Theorem \ref{thm-cont},
recall the definition of the heat-kernel of a continuous-time chain as $H_t(x,y)\deq\P_x\left(X_t=y\right)$,
rewritten in matrix-representation as $H_t = \mathrm{e}^{t(P-I)}$ (where $P$ is the transition kernel
of the chain).

It is well known (and easy) that if $H_t,\widetilde{H}_t$ are the heat-kernels corresponding to the continuous-time chain
and the lazy continuous-time chain, then $H_t = \widetilde{H}_{2t}$ for any $t$. This follows immediately
 from the next simple and well-known matrix-exponentiation argument shows:
\begin{equation}
  \label{eq-cont-lazy-recaling}
  H_t = \mathrm{e}^{t(P-I)} = \mathrm{e}^{2t(\frac{P+I}{2} - I)} = \widetilde{H}_{2t}~.
\end{equation}
Hence, it suffices to show cutoff for the lazy continuous-time chains. We therefore need to simply adjust
the original proof dealing with lazy irreducible chains, from the discrete-time case to the continuous-time case.

 The first modification is in the proof of Lemma \ref{lem-tv-hitting-bound}, where
a no-crossing coupling was constructed for the discrete-time chain. Clearly, no such coupling is required
for the continuous case, as the event that the two chains cross one another at precisely the same time
now has probability $0$.

To complete the proof, one must show that the statement of Corollary \ref{cor-quantile-hitting-concentration} still holds; indeed, this follows from the fact that the hitting time $\tau_{Q(1-\epsilon)}$ of the discrete-time chain
is concentrated, combined with the concentration of the sum of the exponential variables that determine the timescale
of the continuous-time chain.

\subsection{Discrete-time $\delta$-lazy birth-and-death chains}
\begin{theorem}\label{thm-delta-lazy}
 Let $(X_t^{(n)})$ be a family of discrete-time $\delta$-lazy birth-and-death chains, for some fixed $\delta > 0$. Then $(X_t^{(n)})$ exhibits cutoff in total-variation iff $\trel^{(n)} = o(\tmix^{(n)})$,
and the cutoff window size is at most $\sqrt{\tmix^{(n)}(\frac{1}{4}) \cdot \trel^{(n)}}$.
\end{theorem}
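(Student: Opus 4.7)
The plan is to reduce Theorem \ref{thm-delta-lazy} to Corollary \ref{cor-lazy} by a ``lazification'' lift. Given a family $(P_n)$ of $\delta$-lazy birth-and-death transition kernels, define $\widetilde{P}_n \deq \tfrac{1}{2}(I+P_n)$. Each $\widetilde{P}_n$ is automatically $\tfrac{1}{2}$-lazy, so Corollary \ref{cor-lazy} applies directly to the family $(\widetilde{P}_n)$. The task is then to transport the cutoff statement back to $(P_n)$ by comparing the spectra and mixing times of the two families tightly enough that the $\sqrt{\tmix\cdot\trel}$ window survives the translation.

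On the spectral side, $\lambda_i(\widetilde{P}_n) = \tfrac{1}{2}(1+\lambda_i(P_n))$, and $\delta$-laziness gives $\lambda_i(P_n) \in [2\delta-1, 1]$, so every eigenvalue of $\widetilde{P}_n$ lies in $[\delta, 1]$. In the cutoff regime $\gap(P_n) \to 0$, the gap of $P_n$ must be realized by the eigenvalue nearest $+1$, since the negative tail of the spectrum of $P_n$ contributes at least the positive constant $2\delta$ to $\gap(P_n)$; hence $\gap(\widetilde{P}_n) = \tfrac{1}{2}\gap(P_n)$ and $\trel(\widetilde{P}_n) = 2\trel(P_n)$. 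For mixing times I would exploit the Binomial subordination $\widetilde{P}_n^{\,t} = \E\bigl[P_n^{N_t}\bigr]$ with $N_t \sim \mathrm{Bin}(t,\tfrac{1}{2})$, together with the monotonicity of $k \mapsto \|P_n^k-\pi\|_{\mathrm{TV}}$ (the data-processing inequality) and the $O(\sqrt{t})$ concentration of $N_t$ about $t/2$, to obtain the two-sided estimate
\[
\tmix(\widetilde{P}_n, \epsilon) \;=\; 2\,\tmix(P_n, \epsilon) \;+\; O_\epsilon\bigl(\sqrt{\tmix(P_n)}\bigr).
\]
Since $\sqrt{\tmix(P_n)\,\trel(P_n)} \ge \sqrt{\tmix(P_n)}$, this Binomial error is absorbed into the target cutoff window.

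Combining the spectral and mixing comparisons, $\gap(P_n)\tmix(P_n) \to \infty$ is equivalent to $\gap(\widetilde{P}_n)\tmix(\widetilde{P}_n) \to \infty$, and the window $\sqrt{\tmix(\widetilde{P}_n)\,\trel(\widetilde{P}_n)} = 2\sqrt{\tmix(P_n)\,\trel(P_n)}$ for $(\widetilde{P}_n)$ in its own timescale becomes $\sqrt{\tmix(P_n)\,\trel(P_n)}$ after rescaling $t \mapsto t/2$, yielding Theorem \ref{thm-delta-lazy}. The main technical obstacle is the reverse direction of the mixing-time comparison: the upper bound $\|\widetilde{P}_n^{\,t}-\pi\|_{\mathrm{TV}} \le \E\|P_n^{N_t}-\pi\|_{\mathrm{TV}}$ follows immediately from convexity of the TV distance in its first argument, but transferring a cutoff \emph{lower} bound from $\widetilde{P}_n$ to $P_n$ requires arguing that the Binomial mixture $\E[P_n^{N_{2s}}]$---which is concentrated on iterates $P_n^k$ with $|k-s|=O(\sqrt{s})$---cannot be appreciably closer to $\pi$ than $P_n^s$ itself. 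This will follow by combining Binomial tail bounds with the TV monotonicity, using crucially that $\sqrt{s}$ is at most the target window $\sqrt{\tmix\cdot\trel}$.
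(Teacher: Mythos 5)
Your approach is genuinely different from the paper's. Rather than lifting to the $\tfrac12$-lazy chain $\widetilde{P}=\tfrac12(I+P)$ and transporting the cutoff back by binomial subordination, the paper re-examines the proof of Theorem~\ref{thm-1-effective} and patches exactly the two places where laziness is used: the no-crossing coupling in Lemma~\ref{lem-tv-hitting-bound} is replaced by a coupling built from the decomposition $P=(1-\delta)P'+\delta I$ (one conditions on the trajectory of $P'$ and argues coalescence round-by-round over $O(\log(1/\epsilon))$ commute trips between the quantile states), and the appeal to Theorem~\ref{thm-Keilson} is justified directly from the probability-generating function~\eqref{eq-Keilson-pgf}, which produces the same formulas for $\E_0\tau_d$ and $\var_0\tau_d$ regardless of the sign of the eigenvalues.

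The step you yourself flag as the ``main technical obstacle'' is a genuine gap, and the assertion that it ``will follow by combining Binomial tail bounds with the TV monotonicity'' does not hold up. Monotonicity of $k\mapsto\|P^k(x,\cdot)-\pi\|_{\mathrm{TV}}$ together with convexity gives only the one-sided comparison
$d_{\widetilde{P}}(2s)\le d_P(s-c\sqrt{s})+\P\bigl(N_{2s}<s-c\sqrt{s}\bigr)$,
which transfers the pre-cutoff lower bound from $\widetilde{P}$ to $P$ but is useless for the upper bound. To conclude that $P$ has mixed by time about $s$ from the fact that $\widetilde{P}$ has mixed by time about $2s$, you need the reverse inequality $d_{\widetilde{P}}(2s)\ge d_P(s+c\sqrt{s})-\text{error}$, that is, that $\E[P^{N_{2s}}(x,\cdot)]$ is not appreciably closer to $\pi$ than $P^{s}(x,\cdot)$. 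TV monotonicity says nothing of the sort: it lower-bounds each $\|P^k(x,\cdot)-\pi\|_{\mathrm{TV}}$ individually, but distinct iterates can deviate from $\pi$ in opposite directions, so the binomial mixture can be nearly stationary even when every term is far from it. (The period-two chain, where the two-step time average \emph{is} $\pi$, is the extreme case; $\delta$-laziness damps but does not automatically kill such cancellations, and your proof nowhere uses $\delta$ for this step.) Repairing this would require structure specific to birth-and-death chains -- e.g.\ the stochastic monotonicity of $k\mapsto P^k(0,\cdot)$ against half-line test sets -- and a separate argument to handle the worst starting point rather than just the endpoints. That is substantially more work than you indicate, and it is precisely why the paper proceeds via hitting-time concentration instead of a laziness-transfer at the level of total-variation distances.
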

\begin{proof}
In order to extend Theorem \ref{thm-1-effective} and
 Corollary \ref{cor-lazy} to $\delta$-lazy chains, notice that there are precisely two locations where
their proof rely on the fact that the chain is lazy. The first location is the construction of the no-crossing coupling in the proof of Lemma \ref{lem-tv-hitting-bound}. The second location is the fact that all eigenvalues are non-negative in the application of Theorem \ref{thm-Keilson}.

Though we can no longer construct a no-crossing coupling, Lemma \ref{lem-tv-hitting-bound} can be mended as follows: recalling that $P(x,x) \geq \delta$ for all $x\in\Omega_n$, define $P' = \frac{1}{1-\delta}(P - \delta I)$, and notice that $P'$ and $P$ share the same stationary distribution (and hence define the same quantile states $Q(\epsilon)$ and $Q(1-\epsilon)$ on $\Omega_n$). Let $X'$ denote a chain which has the transition kernel $P'$, and $X$ denote its coupled appropriate lazy version: the number of steps it takes $X$ to perform the corresponding move of $X'$ is an independent geometric random variable with mean $1/(1-\delta)$.

Set $p = 1-\delta(1-2\epsilon)$, and condition on the path of the chain $X'$, from the starting point and until this chain completes $T = \lceil \log_p\epsilon \rceil$ rounds from $Q(\epsilon)$ to $Q(1-\epsilon)$, back and forth. As argued before, as $X$ follows this path, upon completion of each commute time from $Q(\epsilon)$ to $Q(1-\epsilon)$ and back, it has probability at least $1-2\epsilon$ to cross $\tX$. Hence, by definition, in each such trip there is a probability of at least $\delta(1-2\epsilon)$ that $X$ and $\tX$ coalesce. Crucially, these events are independent, since we pre-conditioned on the trajectory of $X'$. Thus, after $T$ such trips, the $X$ and $\tX$ have a probability of at least $1-\epsilon$ to meet, as required.

It remains to argue that the expressions for the expectation and variance of the hitting-times, which were derived from Theorem \ref{thm-Keilson}, remain unchanged when moving from the $\frac{1}{2}$-lazy setting to $\delta$-lazy chains. Indeed, this follows directly from the expression for the probability-generating-function, as given in \eqref{eq-Keilson-pgf}.
\end{proof}

\bigskip
\section{Separation in birth-and-death chains}\label{sec:separation}
In this section, we provides the proofs for Proposition \ref{prop-sep} and Corollary \ref{cor-tv-sep-worst}.

Let $(X_t)$ be an ergodic birth-and-death chain on $\Omega_n=\{0,\ldots,n\}$, with a transition kernel $P$ and stationary distribution $\pi$. Let $d_\mathrm{sep}(t;x)$ denote the separation of $X$, started from $x$, from $\pi$, that is $$d_\mathrm{sep}(t;x) \deq \max_{y\in\Omega_n}\left(1-P^t(x,y)/\pi(y)\right)~.$$
According to this notation, $d_\mathrm{sep}(t) \deq \max_{x\in\Omega_n} d_\mathrm{sep}(t;x)$ measures separation from the worst starting position.

The chain $X$ is called \emph{monotone} iff $P_{i,i+1} + P_{i+1,i}\leq 1$ for all $i< n$. It is well known (and easy to show) that if $X$ is monotone, then the likelihood ratio $P^t(0,k)/\pi(k)$ is monotone decreasing in $k$ (see, e.g., \cite{DiFi}). An immediate corollary of this fact is that the separation of such a chain from the stationary distribution is the same for the two starting points $\{0,n\}$. We provide the proof of this simple fact for completeness.
\begin{lemma}\label{lem-sep-monotone-chain-endpoint}
Let $P$ be the transition kernel of a monotone birth-and-death chain on $\Omega_n=\{0,\ldots,n\}$. If $f:\Omega_n\to\R$ is a monotone increasing (decreasing) function, so is $P f$. In particular,
\begin{equation}\label{eq-monotone-likelihood} P^t(k,0) \geq P^t(k+1,0)~\mbox{ for any $t \geq 0$ and $0 \leq k < n$ }.\end{equation}
\end{lemma}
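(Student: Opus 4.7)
The plan is to prove the lemma in two stages. The second statement on $P^t(k,0)$ follows immediately from the first by observing that the indicator $\mathbf{1}_{\{0\}}$ is a monotone decreasing function on $\Omega_n$ and that $P^t(k,0) = (P^t \mathbf{1}_{\{0\}})(k)$; iterating the monotonicity-preservation statement $t$ times then yields $P^t(k,0) \geq P^t(k+1,0)$. So the essential task is to show that $Pf$ is monotone in the same direction as $f$.

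For the monotonicity-preservation step I would use a direct computation. Writing $q_i = P(i,i-1)$, $r_i = P(i,i)$, $p_i = P(i,i+1)$, and $\Delta_i = f(i+1) - f(i)$, a short algebraic rearrangement gives
\begin{equation*}
(Pf)(i+1) - (Pf)(i) = p_{i+1}\,\Delta_{i+1} + (1 - p_i - q_{i+1})\,\Delta_i + q_i\,\Delta_{i-1}.
\end{equation*}
If $f$ is monotone increasing, every $\Delta_j \geq 0$, and the monotonicity hypothesis $P_{i,i+1}+P_{i+1,i} \leq 1$ states precisely that $p_i + q_{i+1} \leq 1$, so the middle coefficient is nonnegative; hence the whole expression is $\geq 0$. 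The decreasing case follows by replacing $f$ with $-f$. A conceptually equivalent route is to construct a monotone grand coupling: drive the two chains started at $k$ and $k+1$ by a single uniform variable $U \in [0,1]$ via the quantile rule, and observe that the unique one-step crossing event, $X_{t+1} = i+1$ while $Y_{t+1} = i$, requires $U \geq 1 - p_i$ and simultaneously $U < q_{i+1}$, which has positive probability exactly when $p_i + q_{i+1} > 1$; thus the monotonicity hypothesis is precisely what forbids crossings, and $f(X_t) \leq f(Y_t)$ gives the claim upon taking expectations.

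I expect no genuine obstacle here; the only mild bookkeeping is at the boundary indices $i = 0$ and $i = n$, where one uses the conventions $q_0 = 0$ and $p_n = 0$ (and interprets $\Delta_{-1}, \Delta_n$ as multiplied by zero coefficients), after which the displayed identity remains valid. Once the single-step statement is in hand, induction on $t$ (or iterating the coupling) delivers \eqref{eq-monotone-likelihood} with no further work.
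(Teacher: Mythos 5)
Your proof is correct and follows essentially the same route as the paper's: both rely on the elementary algebraic observation that the one-step difference $(Pf)(i+1)-(Pf)(i)$ is a nonnegative combination of consecutive increments of $f$, with the coefficient $1-p_i-q_{i+1}\geq 0$ supplied by the monotonicity hypothesis, and both then iterate the resulting preservation property applied to the decreasing function $\one_{\{0\}}$. Your single exact identity is a slight repackaging of the paper's pair of bounding inequalities, and your coupling aside is a valid alternative but not needed.
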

\begin{proof}
Let $\{p_i\}$, $\{q_i\}$ and $\{r_i\}$ denote the birth, death and holding probabilities of the chain respectively, and for convenience, let $f(x)$ be $0$ for any $x\notin\Omega_n$. Assume without loss of generality that $f$ is increasing (otherwise, one may consider $-f$). In this case, the following holds for every $0 \leq x < n$:
\begin{align*}
  Pf(x) &= q_x f(x-1) + r_x f(x) + p_x f(x+1)  \\
  &\leq (1-p_x) f(x) + p_x f(x+1)~,
\end{align*}
and
\begin{align*}
  Pf(x+1) &= q_{x+1} f(x) + r_{x+1} f(x+1) + p_{x+1} f(x+2)  \\
  &\geq q_{x+1} f(x) + (1-q_{x+1}) f(x+1)~.
\end{align*}
Therefore, by the monotonicity of $f$ and the fact that $p_x+q_{x+1}\leq 1$ we obtain that $Pf(x) \leq Pf(x+1)$, as required.

Finally, the monotonicity of the chain implies that $P^t(\cdot,0)$ is monotone decreasing for $t=1$, hence the above argument immediately implies that this is the case for any integer $t \geq 1$.
\end{proof}
By reversibility, the following holds for any monotone birth-and-death chain with transition kernel $P$ and stationary distribution $\pi$:
\begin{equation}\label{eq-monotone-likelihood-2} \frac{P^t(0,k)}{\pi(k)} \geq \frac{P^t(0,k+1)}{\pi(k+1)}~\mbox{ for any $t \geq 0$ and $0 \leq k < n$ }.\end{equation}
In particular, the maximum of $1-P^t(0,j)/\pi(j)$ is attained at $j=n$, and
the separation is precisely the same when starting at either of the two endpoints:
\begin{corollary}\label{cor-sep-symmetry}
 Let $(X_t)$ be a monotone irreducible birth-and-death chain on $\Omega_n=\{0,\ldots,n\}$ with transition kernel $P$ and stationary distribution $\pi$. Then for any integer $t$,
 $$\mathrm{sep}\left(P^t(0,\cdot),\pi\right) = 1 - \frac{P^t(0,n)}{\pi(n)} = 1-\frac{P^t(n,0)}{\pi(0)}
 =\mathrm{sep}\left(P^t(n,\cdot),\pi\right)~.$$
\end{corollary}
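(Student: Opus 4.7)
The proof should be essentially immediate from the ingredients already assembled, so my plan is to organize it as three short arguments corresponding to the three equalities in the display.

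For the outer equality $\mathrm{sep}(P^t(0,\cdot),\pi) = 1 - P^t(0,n)/\pi(n)$, I would invoke \eqref{eq-monotone-likelihood-2} directly: the likelihood ratio $P^t(0,k)/\pi(k)$ is monotone decreasing in $k$, so its minimum (and hence the maximum of $1 - P^t(0,k)/\pi(k)$) is attained at $k = n$. The middle equality $P^t(0,n)/\pi(n) = P^t(n,0)/\pi(0)$ is just reversibility in the form $P^t(0,n)\pi(0) = P^t(n,0)\pi(n)$, which holds for every reversible chain (and every birth-and-death chain is reversible).

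The last equality $\mathrm{sep}(P^t(n,\cdot),\pi) = 1 - P^t(n,0)/\pi(0)$ is the mirror image of the first, and the cleanest way to deduce it is to apply Lemma \ref{lem-sep-monotone-chain-endpoint} to the indicator $\one_{\{n\}}$, which is monotone increasing: the lemma then gives that $P^t(k,n) = (P^t \one_{\{n\}})(k)$ is monotone increasing in $k$, and by one more use of reversibility $P^t(n,k)/\pi(k) = P^t(k,n)/\pi(n)$ is also monotone increasing in $k$. Its minimum, and hence the separation maximum, is therefore attained at $k = 0$.

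There is really no obstacle here; the whole point of Lemma \ref{lem-sep-monotone-chain-endpoint} and the remark following it is to prepare exactly these facts, so the corollary amounts to a bookkeeping step that combines monotonicity of the likelihood ratio (from each endpoint separately) with detailed balance. If anything, the only thing to be slightly careful about is to state Lemma \ref{lem-sep-monotone-chain-endpoint} for decreasing test functions as well as increasing ones, so that both directions of monotonicity are available; this is already handled by the parenthetical in the lemma's statement.
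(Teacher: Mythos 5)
Your proof is correct and follows essentially the same route the paper intends: the first equality from \eqref{eq-monotone-likelihood-2} (monotonicity of the likelihood ratio from the left endpoint), the middle equality from detailed balance, and the last from the mirrored monotonicity argument starting at $n$. The only difference is that you carry out the mirror step explicitly by applying Lemma \ref{lem-sep-monotone-chain-endpoint} to the increasing indicator $\one_{\{n\}}$, whereas the paper leaves it implicit (``the separation is precisely the same when starting at either of the two endpoints''), so your write-up is if anything a little more careful.
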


Since lazy chains are a special case of monotone chains, the relation \eqref{eq-cont-lazy-recaling} between lazy and non-lazy continuous-time chains gives an analogous statement for continuous-time irreducible birth-and-death chains. That is, for any real $t> 0$,
 $$\mathrm{sep}\left(H_t(0,\cdot),\pi\right) = 1 - \frac{H_t(0,n)}{\pi(n)} = 1-\frac{H_t(n,0)}{\pi(0)}
 =\mathrm{sep}\left(H_t(n,\cdot),\pi\right)~,$$
 where $H_t$ is the heat-kernel of the chain, and $\pi$ is its stationary distribution.


Unfortunately, when attempting to generalize Lemma \ref{lem-sep-monotone-chain-endpoint} (and Corollary \ref{cor-sep-symmetry}) to an arbitrary starting point, one finds that it is no longer the case that the worst separation involves one of the endpoints, even if the chain is monotone and irreducible. This is demonstrated next.
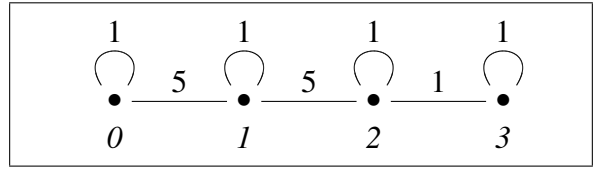
\begin{figure}
$$
\fbox{
\qquad
\def\labelstyle{\mbox}
\SelectTips{cm}{}
\xymatrix @+1pc@R=0pt {
 \bullet  \ar@{-}[r]^{5} \ar@(ru,lu)@{-}[]_1
& \bullet \ar@{-}[r]^{5} \ar@(ur,ul)@{-}[]_1
& \bullet \ar@{-}[r]^{1} \ar@(ur,ul)@{-}[]_1
& \bullet \ar@(ur,ul)@{-}[]_1 \\
\emph{0} & \emph{1} & \emph{2} & \emph{3}
}
\qquad
}
$$
\caption{A monotone irreducible birth-and-death chain where worst separation may not involve the endpoints. Edge weights denote the conductances (see Example \ref{example-monotone-worst-starting-pos}).}\label{fig-sep-chain}
\end{figure}
\begin{example}\label{example-monotone-worst-starting-pos}
Let $P$ and $\pi$ denote the transition kernel and stationary distribution of the birth-and-death chain on the state space $\Omega_3=\{0,1,2,3\}$, given in Figure \ref{fig-sep-chain}. It is easy to verify that this chain is monotone and irreducible, and furthermore, that the following holds:
\begin{align*}
&  \min_{y\in\Omega_3} \frac{P^2(1,y)}{\pi(y)}~\mbox{ is attained solely at $y=2$,}\\
&  \min_{x,y\in\Omega_3} \frac{P^3(x,y)}{\pi(y)}~\mbox{ is attained solely at $x=y=1$.}
\end{align*}Thus, when starting from an interior point, the worst separation might not be attained by an endpoint, and in addition, the overall worst separation may not involve the endpoints at all.
\end{example}
However, as we next show, once we replace the monotonicity requirement with the stricter assumption that the chain is \emph{lazy}, it turns out that the above phenomenon can no longer occur.

The approach that
led to the following result relied on \emph{maximal couplings} (see, e.g., \cite{Griffeath}, \cite{Pitman} and \cite{Goldstein}, and also \cite{CouplingMethod}*{Chapter III.3}). We provide a straightforward proof for it, based on an inductive argument.
\begin{lemma}\label{lem-sep-lazy-chain-middle}
 Let $P$ be the transition kernel of a lazy birth-and-death chain. Then for any unimodal non-negative $f:\Omega \to \R^+$, the function $P f$ is also unimodal. In particular, for any integer $t$, all columns of $P^t$ are unimodal.
\end{lemma}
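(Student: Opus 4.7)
The plan is to reduce unimodality of $Pf$ to a sign analysis of the discrete derivative $\Delta g(x) \deq g(x+1) - g(x)$ of $g \deq Pf$. Writing $p_x = P(x,x+1)$, $q_x = P(x,x-1)$, $r_x = P(x,x)$ and using $p_x + q_x + r_x = 1$ (with the boundary convention $q_0 = p_n = 0$), a direct calculation yields the identity
\begin{equation*}
\Delta g(x) \;=\; q_x\, \Delta f(x-1) \;+\; \alpha_x\, \Delta f(x) \;+\; p_{x+1}\, \Delta f(x+1),
\end{equation*}
where $\alpha_x \deq 1 - q_{x+1} - p_x$ and $\Delta f(-1)$, $\Delta f(n)$ are multiplied by zero. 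Laziness, i.e.\ $p_x + q_x \leq \frac12$ for every $x$, yields $\alpha_x \geq 0$, so $\Delta g(x)$ is a non-negative combination of three consecutive increments of $f$.

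Let $m$ be a mode of $f$; the case $m \in \{0,n\}$ is immediate from the identity, since then $\Delta f$ has a single sign throughout, so assume $1 \leq m \leq n-1$. By definition $\Delta f(y) \geq 0$ for $y \leq m-1$ and $\Delta f(y) \leq 0$ for $y \geq m$. Reading off the identity, $\Delta g(x) \geq 0$ whenever $x \leq m-2$ and $\Delta g(x) \leq 0$ whenever $x \geq m+1$, so the only obstruction to unimodality of $g$ is the pattern $\Delta g(m-1) < 0 < \Delta g(m)$, which would force $g$ to dip and then rise.

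The heart of the proof is ruling out this offending configuration, and here laziness is used essentially. Setting $a \deq \Delta f(m-1) \geq 0$ and $b \deq -\Delta f(m) \geq 0$, the inequality $\Delta g(m-1) < 0$ forces $p_m b > \alpha_{m-1} a$, while $\Delta g(m) > 0$ forces $q_m a > \alpha_m b$ (the remaining contributions $q_{m-1}\Delta f(m-2)$ and $p_{m+1}\Delta f(m+1)$ in $\Delta g(m-1)$ and $\Delta g(m)$ have the ``right'' sign and can only strengthen these bounds). If $ab>0$, multiplying gives $p_m q_m > \alpha_{m-1}\alpha_m$, while $a=0$ or $b=0$ collapses one of the two inequalities to $0>0$. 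So it is enough to verify
\begin{equation*}
p_m q_m \;\leq\; \alpha_{m-1}\, \alpha_m.
\end{equation*}
Laziness at $m-1$ gives $p_{m-1} \leq \frac12$, so $\alpha_{m-1} \geq \frac12 - q_m$, and symmetrically laziness at $m+1$ yields $\alpha_m \geq \frac12 - p_m$; the claimed inequality now reduces to $\bigl(\tfrac12 - p_m\bigr)\bigl(\tfrac12 - q_m\bigr) - p_m q_m = \tfrac14 - \tfrac{p_m + q_m}{2} \geq 0$, which is exactly laziness at $m$. This contradiction completes the proof that $Pf$ is unimodal.

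For the ``in particular'' claim, I proceed by induction on $t$: the column $P^0(\cdot,y) = \one_{\{y\}}$ is trivially unimodal and non-negative, and $P^t(\cdot,y) = P\bigl[P^{t-1}(\cdot,y)\bigr]$, so the first part of the lemma carries the induction through. The main obstacle is the delicate sign analysis at the two points $x \in \{m-1,m\}$, and it is notable that all three instances of laziness (at $m-1$, $m$, and $m+1$) are simultaneously required for the AM--GM-type inequality, so one should not expect the statement to survive under a weaker hypothesis localized only at the mode itself.
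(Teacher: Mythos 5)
Your proof is correct, and it takes a genuinely different route from the paper's. You package the problem as a sign analysis of the first difference $\Delta g = \Delta(Pf)$, derive the explicit telescoping identity $\Delta g(x) = q_x\,\Delta f(x-1) + \alpha_x\,\Delta f(x) + p_{x+1}\,\Delta f(x+1)$, observe that unimodality away from the mode $m$ is automatic once $\alpha_x \geq 0$, and rule out the single offending pattern $\Delta g(m-1) < 0 < \Delta g(m)$ by a multiplicative (AM--GM-type) argument showing $p_m q_m \leq \alpha_{m-1}\alpha_m$. The paper instead argues directly with the values $g(x)$: it proves monotonicity of $g$ on $\{0,\dots,m-1\}$ and on $\{m+1,\dots,n\}$ by pairwise comparison (using only monotonicity of the chain, $p_{x-1}+q_x\leq 1$), reduces unimodality to the single inequality $g(m)\geq\min\{g(m-1),g(m+1)\}$, assumes WLOG $f(m-1)\geq f(m+1)$, and verifies $g(m)\geq g(m+1)$ via the one comparison $r_m\geq q_{m+1}$. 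The paper's route is shorter and isolates a single scalar inequality where laziness is needed; your route is more symmetric and algebraic, producing an explicit kernel for $\Delta g$ that makes the sign structure transparent at a glance. One small refinement worth noting: your step $\alpha_x\geq 0$ needs only monotonicity ($p_x+q_{x+1}\leq 1$), not full laziness, and the paper's version of the critical inequality at the mode also involves only two (not three) of the sites $\{m-1,m,m+1\}$ in any given case (depending on whether $f(m-1)\geq f(m+1)$ or not); so your closing remark that laziness at all three sites is ``simultaneously required'' is accurate for your multiplicative argument, but is a feature of that argument rather than of the statement.
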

\begin{proof}
Let $\{p_i\}$, $\{q_i\}$ and $\{r_i\}$ be the birth, death and holding probabilities of the chain respectively,
 and for convenience, define $f(i)$ to be $0$ for $i\in \N \setminus\Omega$.
Let $m\in\Omega$ be a state achieving the global maximum of $f$, and set $g = P f$.

For every $0 < x < m$, the unimodality of $f$ implies that
\begin{align*}
  g(x) &=  q_x f(x-1) + r_x f(x) + p_x f(x+1) \\&\geq q_x f(x-1) + (1-q_x) f(x)~,
  \end{align*}
and similarly,
  \begin{align*}
  g(x-1) &= q_{x-1} f(x-2) + r_{x-1} f(x-1) + p_{x-1} f(x) \\
  &\leq (1-p_{x-1})f(x-1) + p_{x-1}f(x)~.
\end{align*}
Therefore, by the monotonicity of the chain, we deduce that $g(x) \geq g(x-1)$. The same argument shows that
for every $m < y < n$ we have $g(y) \geq g(y+1)$.

As $g$ is increasing on $\{0,\ldots,m-1\}$ and decreasing on $\{m+1,\ldots,n\}$,
unimodality will follow from showing that $g(m) \geq \min\left\{g(m-1),g(m+1)\right\}$ (the global maximum of $g$ would then be attained at $m'\in\{m-1,m,m+1\}$).
To this end, assume without loss of generality that $f(m-1) \geq f(m+1)$. The following holds:
\begin{align*}
  g(m) &=  q_m f(m-1) + r_m f(m) + p_m f(m+1) \\&\geq r_m f(m) + (1-r_m) f(m+1)~,
\end{align*}
and
  \begin{align*}
  g(m+1) &= q_{m+1} f(m) + r_{m+1} f(m+1) + p_{m+1} f(m+2) \\
  &\leq q_{m+1}f(m) + (1-q_{m+1}) f(m+1)~.
\end{align*}
Thus, the laziness of the chain implies that $g(m) \geq g(m+1)$, as required.
\end{proof}

By reversibility, Lemma \ref{lem-sep-lazy-chain-middle} has the following corollary:
\begin{corollary}\label{cor-sep-unimodal}
 Let $(X_t)$ be a lazy and irreducible birth-and-death chain on the state space $\Omega_n=\{0,\ldots,n\}$, with transition kernel $P$ and stationary distribution $\pi$.
 Then for any $s\in\Omega_n$ and any integer $t\geq 0$, the function $f(x)\deq P^t(s,x)/\pi(x)$ is unimodal.
\end{corollary}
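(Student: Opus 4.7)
The plan is to reduce the claim to Lemma \ref{lem-sep-lazy-chain-middle}, which asserts that every column of $P^t$ is a unimodal function of its row index. The key observation is that after an application of the detailed-balance equations, the ratio $P^t(s,x)/\pi(x)$ becomes (up to a constant in $x$) precisely such a column.

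Concretely, I would proceed as follows. Fix $s \in \Omega_n$ and an integer $t \geq 0$. Since $X$ is a reversible chain (every irreducible birth-and-death chain is), the detailed-balance equations yield
\begin{equation*}
\pi(x)\,P^t(x,s) = \pi(s)\,P^t(s,x) \quad \text{for every } x \in \Omega_n,
\end{equation*}
and consequently
\begin{equation*}
f(x) \;=\; \frac{P^t(s,x)}{\pi(x)} \;=\; \frac{P^t(x,s)}{\pi(s)}.
\end{equation*}
Since $\pi(s)$ is a positive constant not depending on $x$, the unimodality of $f$ in $x$ is equivalent to the unimodality of $x \mapsto P^t(x,s)$, which is exactly the $s$-th column of $P^t$.

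Now I would invoke Lemma \ref{lem-sep-lazy-chain-middle}: starting from the indicator $\one_{\{s\}}$, which is trivially unimodal and non-negative, $t$ iterated applications of $P$ preserve unimodality, so $P^t \one_{\{s\}}$, viewed as a function on $\Omega_n$, is unimodal; but $(P^t \one_{\{s\}})(x) = P^t(x,s)$. This gives the required unimodality of $f$. There is no substantive obstacle here — the argument is essentially just bookkeeping once one notes that reversibility converts the $s$-th row divided by $\pi$ into the $s$-th column (rescaled), bringing us into the setting already handled by Lemma \ref{lem-sep-lazy-chain-middle}.
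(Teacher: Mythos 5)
Your proof is correct and is essentially the argument the paper intends: the paper states Corollary \ref{cor-sep-unimodal} as an immediate consequence of Lemma \ref{lem-sep-lazy-chain-middle} "by reversibility," which is precisely the identity $P^t(s,x)/\pi(x)=P^t(x,s)/\pi(s)$ you derive. Your application of the lemma to $\one_{\{s\}}$ is exactly how its "all columns of $P^t$ are unimodal" clause is obtained, so the two arguments coincide.
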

\begin{remark*}
The maximum of the unimodal function $f(x)$ in Corollary \ref{cor-sep-unimodal} need not be located at $x=s$, the starting point of the chain. This can be demonstrated, e.g., by the biased random walk.
\end{remark*}
Proposition \ref{prop-sep} will immediately follow from the above results.
\begin{proof}[\textbf{\emph{Proof of Proposition \ref{prop-sep}}}]
We begin with the case where $(X_t)$ is a lazy birth-and-death chain, with transition kernel $P$.
Let $s\in \Omega_n$ be a starting position which maximizes $d_\mathrm{sep}(t)$. Then by Corollary \ref{cor-sep-unimodal}, $d_\mathrm{sep}(t)$ is either equal to $1-P^t(s,0)/\pi(0)$ or to $1-P^t(s,n)/\pi(n)$. Consider the first case (the second case is treated by the exact same argument); by reversibility,
$$d_\mathrm{sep}(t) = 1 - \frac{P^t(0,s)}{\pi(s)} \leq 1 - \frac{P^t(0,n)}{\pi(n)}~,$$
where the last inequality is by Lemma \ref{lem-sep-monotone-chain-endpoint}. Therefore, the endpoints of $X$ assume the worst separation distance at every time $t$.

To show that $d_\mathrm{sep}(t) = 1-H_t(0,n)/\pi(n)$ in the continuous-time case, recall that
$$H_t(x,y) = \P_x(X_t = y) = \E \left[P^{N_t}(x,y)\right] = \sum_k P^k(x,y) \P(N_t = k)~,$$
where $P$ is the transition kernel of the corresponding discrete-time chain, and $N_t$ is a Poisson random variable with mean $t$. Though $P^k$ has unimodal columns for any integer $k$, a linear combination of the matrices $P^k$ does not necessarily maintain this property. We therefore consider a variant of the process, where $N_t$ is approximated by an appropriate binomial variable.

Fix $t > 0$, and for any integer $m \geq 2t$ let $N'_t(m)$ be a binomial random variable with parameters $\mathrm{Bin}(m,t/m)$. Since $N'_t(m)$ converges in distribution to $N_t$, it follows that $H'_t(m) \deq \E \left[P^{N'_t(m)}\right]$ converges to $H_t$ as $m\to\infty$. Writing $N'_t(m)$ as a sum of independent indicators $\{B_i:i=1,\ldots,m\}$ with success probabilities $t/m$, and letting
$Q \deq \left(1-\frac{t}{m}\right)I + \frac{t}{m}P$, we have
$$ H'_t(m) = \E\left[P^{\sum_{i=1}^m B_i}\right] = Q^m~.$$
Note that for every $m \geq 2t$, the transition kernel $Q$ corresponds to a lazy birth-and-death chain, thus Lemma \ref{lem-sep-lazy-chain-middle} ensures that $H'_t(m)$ has unimodal columns for every such $m$. In particular, $H_t = \lim_{m\to\infty}H'_t(m)$ has unimodal columns. This completes the proof.
\end{proof}

\begin{proof}[\textbf{\emph{Proof of Corollary \ref{cor-tv-sep-worst}}}]
By Theorem \ref{thm-cont}, total-variation cutoff (from the worst starting position) occurs iff $\trel = o\big(\tmix(\frac{1}{4})\big)$. Combining
Proposition \ref{prop-sep} with \cite{DiSa}*{Theorem 5.1} we deduce that separation cutoff (from the worst starting point) occurs if and only if $\trel = o\big(\tsep(\frac{1}{4})\big)$ (where $\tsep(\epsilon)=\max_x \tsep(\epsilon ; x)$ is the minimum $t$ such that $\max_x \mathrm{sep}(H_t(x, \cdot),\pi) \leq \epsilon$).

Therefore, the proof will follow from the well known fact that $\tsep(\frac{1}{4})$ and $\tmix(\frac{1}{4})$ have the same order. One can obtain this fact, for instance, from Lemma 7 of \cite{AF}*{Chapter 4}, which states that (as the chain is reversible)
$$ \bar{d}(t) \leq d_\mathrm{sep}(t) ~,~\mbox{ and }~ d_\mathrm{sep}(2t) \leq 1-\left(1-\bar{d}(t)\right)^2~,$$
where $\bar{d}(t) \deq \max_{x,y\in\Omega}\left\|\P_x(X_t\in\cdot)-\P_y(X_t\in\cdot)\right\|_\mathrm{TV}$. Combining this with the sub-multiplicativity of $\bar{d}(t)$, and the fact that $d(t)\leq \bar{d}(t)\leq 2d(t)$ (see Definition 3.1 in \cite{AF}*{Chapter 4}), we obtain that for any $t$,
$$ d(t) \leq d_\mathrm{sep}(t)~,~\mbox{ and }~ d_\mathrm{sep}(8t) \leq 2 \bar{d}(4t) \leq 32 \left(d(t)\right)^4 ~.$$
This in turn implies that $\frac{1}{8} \tsep(\frac{1}{4}) \leq \tmix(\frac{1}{4}) \leq \tsep(\frac{1}{4})$, as required.
\end{proof}

\section{Concluding remarks and open problems}\label{sec:conclusion}
\begin{itemize}
%
%

\item As stated in Corollary \ref{cor-tv-sep-worst}, our results on continuous-time birth-and-death chains, combined with those of \cite{DiSa}, imply that
cutoff in total-variation distance is equivalent to separation cutoff for such chains.
This raises the following question:
\begin{question}
Let $(X_t^{(n)})$ denote a family of irreducible reversible Markov chains, either in continuous-time or in lazy discrete-time. Is it true that there is cutoff in separation iff there is cutoff in total-variation distance (where the distance in both cases is measured from the worst starting position)?
\end{question}

\item One might assume that the cutoff-criterion \eqref{eq-cutoff-conj} also holds for close variants of birth-and-death chains.  For that matter, we note that Aldous's example of a family of reversible Markov chains, which satisfies $\trel^{(n)}=o\big(\tmix^{(n)}(\frac{1}{4})\big)$ and yet does not exhibit cutoff,
    can be written so that each of its chains is a biased random walk on a cycle. In other words, it suffices
    that a family of
    birth-and-death chains permits the one extra transition between states $0$ and $n$, and
    already the cutoff criterion \eqref{eq-cutoff-conj} ceases to hold.
\medskip
\item Finally, it would be interesting to characterize the cutoff criterion in additional natural families of ergodic Markov chains.
\begin{question}
  Does \eqref{eq-cutoff-conj} hold for the family of lazy simple random walks on vertex transitive bounded-degree graphs?
\end{question}
\end{itemize}

\section*{Acknowledgments}

We thank Persi Diaconis, Jim Fill, Jim Pitman and Laurent Saloff-Coste for valuable comments on an early draft, as well as
an anonymous referee for useful suggestions.


\begin{bibdiv}
\begin{biblist}
\bib{Aldous}{article}{
    AUTHOR = {Aldous, David},
     TITLE = {Random walks on finite groups and rapidly mixing {M}arkov chains},
 BOOKTITLE = {Seminar on probability, XVII},
    SERIES = {Lecture Notes in Math.},
    VOLUME = {986},
     PAGES = {243--297},
 PUBLISHER = {Springer},
   ADDRESS = {Berlin},
      YEAR = {1983},
}

\bib{Aldous2}{article}{
    AUTHOR = {Aldous, David},
    conference = {
        title = {American Institute of Mathematics (AIM) research workshop ``Sharp Thresholds for Mixing Times''},
        address = {Palo Alto},
        date = {December 2004},
        },
    note ={Summary available at \texttt{http://www.aimath.org/WWN/mixingtimes}},
}  

\bib{AlDi}{article}{
    AUTHOR = {Aldous, David},
    AUTHOR = {Diaconis, Persi},
    TITLE = {Shuffling cards and stopping times},
    Journal = {Amer. Math. Monthly},
    Volume = {93},
    Pages = {333--348},
    Year = { 1986 },
}


\bib{AF}{book}{
    AUTHOR = {Aldous, David},
    AUTHOR = {Fill, James Allen},
    TITLE =  {Reversible {M}arkov Chains and Random Walks on Graphs},
    note = {In preparation, \texttt{http://www.stat.berkeley.edu/\~{}aldous/RWG/book.html}},
}

\bib{Chen}{article}{
    author = {Chen, Guan-Yu},
    title = {The cut-off phenomenon for finite Markov chains},
    year = {2006},
    journal = {Ph.D. dissertation, Cornell University},
}

\bib{ChSa}{article}{
    author = {Chen, Guan-Yu},
    AUTHOR = {Saloff-Coste, Laurent},
    title = {The cutoff phenomenon for ergodic Markov processes},
    journal = {Electronic Journal of Probability},
    volume = {13},
    year = {2008},
    pages = {26--78},
}

\bib{Diaconis}{article}{
    AUTHOR = {Diaconis, Persi},
     TITLE = {The cutoff phenomenon in finite {M}arkov chains},
   JOURNAL = {Proc. Nat. Acad. Sci. U.S.A.},
    VOLUME = {93},
      YEAR = {1996},
    NUMBER = {4},
     PAGES = {1659--1664},
}

\bib{DiFi}{article}{
    AUTHOR = {Diaconis, Persi},
    AUTHOR = {Fill, James Allen},
     TITLE = {Strong stationary times via a new form of duality},
   JOURNAL = {Ann. Probab.},
    VOLUME = {18},
      YEAR = {1990},
    NUMBER = {4},
     PAGES = {1483--1522},
}

\bib{DiMi}{article}{
    AUTHOR = {Diaconis, Persi},
    AUTHOR = {Miclo, Laurent},
    TITLE = {On times to quasi-stationarity for birth and death processes},
    NOTE = {preprint},
}

\bib{DiSa}{article}{
    AUTHOR = {Diaconis, Persi},
    AUTHOR = {Saloff-Coste, Laurent},
     TITLE = {Separation cut-offs for birth and death chains},
   JOURNAL = {Ann. Appl. Probab.},
    VOLUME = {16},
      YEAR = {2006},
    NUMBER = {4},
     PAGES = {2098--2122},
}

\bib{DiSh}{article}{
    AUTHOR = {Diaconis, Persi},
    AUTHOR = {Shahshahani, Mehrdad},
     TITLE = {Generating a random permutation with random transpositions},
   JOURNAL = {Z. Wahrsch. Verw. Gebiete},
    VOLUME = {57},
      YEAR = {1981},
    NUMBER = {2},
     PAGES = {159--179},
}

\bib{DLP}{article}{
  title   = {The mixing time evolution of Glauber
dynamics for the Mean-field Ising Model},
  author  = {Ding, Jian},
  author = {Lubetzky, Eyal},
  author = {Peres, Yuval},
  note = {preprint},
}

\bib{Fill}{article}{
    AUTHOR = {Fill, James Allen},
    TITLE = {The passage time distribution for a birth-and-death chain: Strong stationary duality gives a first stochastic proof},
    note = {preprint},
}

\bib{Fill2}{article}{
    AUTHOR = {Fill, James Allen},
    TITLE = {On hitting times and fastest strong stationary times for skip-free chains},
    note = {preprint},
}

\bib{Halmos}{book}{
    AUTHOR = {Halmos, Paul R.},
     TITLE = {Finite-dimensional vector spaces},
 PUBLISHER = {Springer-Verlag},
   ADDRESS = {New York},
      YEAR = {1974},
     PAGES = {viii+200},
}

\bib{KaMc}{article}{
    AUTHOR = {Karlin, Samuel},
    author = {McGregor, James},
     TITLE = {Coincidence properties of birth and death processes},
   JOURNAL = {Pacific J. Math.},
    VOLUME = {9},
      YEAR = {1959},
     PAGES = {1109--1140},
}

\bib{Keilson}{book}{
    AUTHOR = {Keilson, Julian},
     TITLE = {Markov chain models -- rarity and exponentiality},
    SERIES = {Applied Mathematical Sciences},
    VOLUME = {28},
 PUBLISHER = {Springer-Verlag},
   ADDRESS = {New York},
      YEAR = {1979},
}

\bib{Goldstein}{article}{
   author={Goldstein, Sheldon},
   title={Maximal coupling},
   journal={Z. Wahrsch. Verw. Gebiete},
   volume={46},
   date={1978/79},
   number={2},
   pages={193--204},
}

\bib{Griffeath}{article}{
   author={Griffeath, David},
   title={A maximal coupling for Markov chains},
   JOURNAL = {Z. Wahrsch. Verw. Gebiete},
   volume={31},
   date={1975},
   pages={95--106},
}

\bib{LLP}{article}{
  journal = {Probability Theory and Related Fields},
  volume  = {},
  pages   = {},
  year    = {},
  title   = {Glauber dynamics for the Mean-field Ising Model: cut-off, critical power law, and metastability},
  author  = {Levin, David A.},
  author = {Luczak, Malwina},
  author = {Peres, Yuval},
  status = {to appear},
}

\bib{LPW}{book}{
	author = {Levin, David A.},
	author = {Peres, Yuval},
	author = {Wilmer, Elizabeth},
	title =  {Markov Chains and Mixing Times},
	year  =  {2007},
	note = {In preparation.}
}


\bib{Pitman}{article}{
   author={Pitman, J. W.},
   title={On coupling of Markov chains},
   JOURNAL = {Z. Wahrsch. Verw. Gebiete},
   volume={35},
   date={1976},
   number={4},
   pages={315--322},
}

\bib{CouplingMethod}{book}{
   author={Lindvall, Torgny},
   title={Lectures on the coupling method},
   series={Wiley Series in Probability and Mathematical Statistics:
   Probability and Mathematical Statistics},
   note={A Wiley-Interscience Publication},
   publisher={John Wiley \& Sons Inc.},
   place={New York},
   date={1992},
   pages={xiv+257},
}

\bib{Peres}{article}{
    AUTHOR = {Peres, Yuval},
    conference = {
        title = {American Institute of Mathematics (AIM) research workshop ``Sharp Thresholds for Mixing Times''},
        address = {Palo Alto},
        date = {December 2004},
        },
    note ={ Summary available at \texttt{http://www.aimath.org/WWN/mixingtimes}},
}  

\bib{SaloffCoste}{article}{
    AUTHOR = {Saloff-Coste, Laurent},
     TITLE = {Random walks on finite groups},
 BOOKTITLE = {Probability on discrete structures},
    SERIES = {Encyclopaedia Math. Sci.},
    VOLUME = {110},
     PAGES = {263--346},
 PUBLISHER = {Springer},
   ADDRESS = {Berlin},
      YEAR = {2004},
}

\end{biblist}

\end{bibdiv}
\end{document}